\definecolor{Red}{cmyk}{0,1,1,0}
\definecolor{verde}{cmyk}{1,0,1,0}
\definecolor{loka}{cmyk}{.5,0,1,.5}
\definecolor{azul}{cmyk}{1,1,0,0}
\numberwithin{equation}{section}
\newcommand{\be}{\begin{equation}}
\newcommand{\ee}{\end{equation}}
\newtheorem{theorem}{Theorem}
\newtheorem{definition}{Definition}
\newtheorem{lemma}{Lemma}
\begin{document}
\title{On the fractional functional differential equation with abstract Volterra operator}
\author{J. Vanterler da C. Sousa$^1$}
\address{$^1$ Department of Applied Mathematics, Institute of Mathematics,
 Statistics and Scientific Computation, University of Campinas --
UNICAMP, rua S\'ergio Buarque de Holanda 651,
13083--859, Campinas SP, Brazil\newline
$^2$ Department of Mathematics, Shivaji University,\\ Kolhapur 416 004, Maharashtra, India\newline
e-mail: {\itshape \texttt{ra160908@ime.unicamp.br, capelas@ime.unicamp.br,kdkucche@gmail.com }}}

\author{E. Capelas de Oliveira$^1$}
\author{Kishor D. Kucche$^2$}

\begin{abstract} The present paper plans to examine the  existence, uniqueness and data dependence of the solution of the  fractional functional differential equation with the abstract operator of Volterra, in the context of the Picard operators. We present an example, in order to illustrate the results obtained. We present an application, with the end goal to illustrate the results obtained.

\vskip.5cm
\noindent
\emph{Keywords}:$\Psi$-Hilfer fractional derivative, existence, uniqueness, data dependence, abstract Volterra operator, functional fractional differential equation.
\newline 
MSC 2010 subject classifications. 26A33; 47H10; 34K05.
\end{abstract}
\maketitle

\section{Introduction}

In a recent special issue the editorial says \cite{Baleanu}: “The fractional and functional differential equations are two hot topics in mathematics, and they have multiple applications in various branches of science and engineering”. Likewise later is another extraordinary issue on qualitative theory of functional differential equation with the justification \cite{Tunc}: there are several areas where the equations play important rule. 

In 1990, Corduneanu \cite{oto1} investigated second-order functional differential equations involving operators of Volterra abstract. In this sense, around the year 2000 Corduneanu \cite{oto2} presented complete work on the existence and uniqueness of solutions of equations,  a general study on functional differential equations with abstract or causal or non-anticipatory Volterra operators.

On the other hand, in 2102 Otrocol \cite{oto} investigated the Ulam-Hyers and generalized Ulam-Hyers-Rassias stabilities of a differential equation
\begin{equation*}
x'(t)=f(t,x(t),V(x)(t)),\,\,%
\,t\in I:=[a,b]\subset\mathbb{R}
\end{equation*}
whose solution satisfies the following condition 
\begin{equation*}
x(a)=\alpha,\,\,\,x_{0}\in \mathbb{R}
\end{equation*}
where $\mathbb{R}$ is Banach space and $V:C([a,b],\mathbb{R})\rightarrow C([a,b],\mathbb{R})$ an abstract Volterra operator.

In a more recent paper Vanterler-Oliveira \cite{VanterlerOliveira}, using $\Psi-$Hilfer ($\Psi-$H) fractional derivative and the Banach fixed-point theorem, investigate the stabilities of Ulam-Hyers, Ulam-Hyers-Rassias and semi-Ulam-Hyers-Rassias for a particular class of fractional integro-differential equations by fixed point theorems \cite{jose1,jose2,jose3,jose4}. Additionally later is the paper by  Ali et al. \cite{Ali} where the existence of extremal solution to nonlinear boundary value problem of fractional order differential equations was discussed and the Ulam stability was investigated.

On the other hand, by means of the $\Psi-$H fractional derivative, a new operator in fractional calculus was introduced and, as an application the uniqueness of solutions for the nonlinear fractional Volterra integral equation (VIE) was presented \cite{VanterlerOliveira1}. Also, it is important to note that, in general, an analytical solution is very difficult, specifically if the equation is a fractional one. Thus, some indirect methods has been proposed. We mention one of them \cite{FanMophou}, where the authors propose a study of nonlinear problems for fractional differential equations via resolvent operators.

After this short summary and motivated by this theme, in this paper,  we will prove that a fractional functional differential equation with abstract Volterra operator Eq.(\ref{I1}) satisfying condition in Eq.(\ref{I1a}) is equivalent to a fractional Volterra integral equation. 
The existence, uniqueness and data dependence results for the solutions to fractional order functional differential equations with abstract Volterra operator (AVO) utilizing the $\Psi-$H fractional derivative \cite{VanterlerOliveira2} will be presented. As an application we will present a particular example to illustrate our result.

We consider the fractional functional differential equation with abstract Volterra operator of the form: 
\begin{equation}
{}^{\mathbf{H}}\mathfrak{D}_{a^{+}}^{\mu ,\nu ,\Psi }x(t)= 
\mathcal{U}(x)(t)+f(t,x(t)),\,\,%
\,t\in \Delta:=[a,b]  \label{I1}
\end{equation}%
whose solution satisfies the following condition 
\begin{equation}
\mathbf{I}_{a^{+}}^{1-\varepsilon ,\Psi }x(a)=x_{0},\,\,\,x_{0}\in \mathbb{R}  \label{I1a}
\end{equation}%
where $0<\mu \leq 1$,  $0 \leq \beta \leq 1$, $\varepsilon = \mu + \nu (1-\mu)$,  ${}^{\mathbf{H}}\mathfrak{D}_{a^{+}}^{\mu ,\nu ,\Psi }(\cdot )$ is the $\Psi-$H fractional derivative, $\mathbf{I}_{a^{+}}^{1-\varepsilon ,\Psi }(\cdot )$ is the $\Psi $-Riemann-Liouville fractional integral,  $\mathcal{U}:C_{\mu, \Psi, \xi }(\Delta\times \mathbb{R})\rightarrow
C_{\mu, \Psi, \xi }(\Delta\times \mathbb{R})$ is an abstract Volterra operator (AVO) and 
$f:C_{\mu, \Psi, \xi }(\Delta\times \mathbb{R})\rightarrow \mathbb{R}$ be an appropriate function specified later.

The primary inspiration for the elaboration of this paper is to introduce a work that permits a general perusing in the part of the investigation of  existence, uniqueness and data dependence of the fractional functional differential equation Eq.(\ref {I1})-Eq.(\ref {I1a}) with the abstract operator of Volterra, proposed by means of the $\Psi-$H fractional derivative.

The paper is organized as follows: In section 2, we present the definition  of $\psi-$H fractional derivative and the fundamental results about it.  We introduce the $(\mu,\Psi,\xi )$--Bielecki type norm and talk about its  special cases, Weakly Picard Operator. Likewise, we recall the definition of Picard's abstract operator and a few outcomes about it close this section. In section 3, we investigate the mains results: existence, uniqueness and data dependence of the solutions of the fractional functional differential equation with abstract Volterra operator. We present an example with the intention of illuminating the results obtained. Conclusions and the remarks closes the paper.

\section{Preliminaries}
\subsection{ $\Psi-$H fractional derivative:}
Let $\mu >0$, $\Delta\subset \mathbb{R}$ and $\Psi (t)$ be an increasing and positive monotone function on $(a,b]$, having a continuous derivative,
denoted by $\Psi ^{\prime }(t)$ on $\Delta$. The Riemann-Liouville fractional integral of a function $f$ with respect to another function $\Psi 
$ on $\Delta$ is defined by 
\begin{equation}
\mathbf{I}_{a+}^{\mu ,\Psi }x(t)=\frac{1}{\Gamma (\mu )}\int_{a}^{b}\mathcal{G}_{\Psi}^{\mu}(t,s)x(s)\,{\mbox{d}}s  \notag  \label{P7}
\end{equation}%
where  $\mathcal{G}_{\Psi }^{\mu }(t,s)=\Psi ^{\prime}(s)(\Psi(t)-\Psi(s))^{\mu-1}$ and $\Gamma(\cdot)$ is the gamma function, given by
\begin{equation}
\Gamma \left( z\right) =\int_{0}^{\infty }e^{-t}t^{z-1}dt, ~z\in \mathbb{C} ~\mbox{with} ~\mbox{Re}\left( z\right) >0.
\end{equation} The Riemann-Liouville fractional integral of a function $f$ with respect to another function $\mathbf{I}_{b-}^{\mu ,\Psi }\left( \cdot \right) $ is defined  analogously {\rm \cite{VanterlerOliveira2}}.

On the other hand, let $n-1<\mu \leq n$ with $n\in \mathbb{N}$, $\Delta=[a,b]$ be an interval such that $-\infty \leq a<b\leq +\infty $ and let $f,\Psi \in
C^{n}(\Delta,\mathbb{R})$ be two functions such that $\Psi $ is increasing and $\Psi ^{\prime }(t)\neq 0$, for $t\in \Delta$. Then the $\Psi-$H fractional derivative, denoted by ${}^{\mathbf{H}}\mathfrak{D}_{a^{+}}^{\mu ,\nu ,\Psi }(\cdot )$, of a function $f$ of order $\mu $ and type $\nu~ (0\leq \nu \leq 1)$, is defined by {\rm \cite{VanterlerOliveira2}} 
\begin{equation}
{}^{\mathbf{H}}\mathfrak{D}_{a+}^{\mu ,\nu ,\Psi }x(t)=\mathbf{I}_{a^{+}}^{\nu (n-\mu ),\Psi }\left( \frac{1}{\Psi ^{\prime }(t)}\frac{{\mbox{d}}}{{\mbox{d}}t}%
\right) ^{n}\mathbf{I}_{a^{+}}^{(1-\nu )(n-\mu ),\Psi }x(t).  \label{P8}
\end{equation}

The $\Psi-$H fractional derivative, denoted by ${}^{\mathbf{H}}\mathfrak{D}_{b-}^{\mu ,\nu ,\Psi }\left( \cdot \right) $ is defined analogously {\rm \cite{VanterlerOliveira2}}.

\begin{theorem}[\cite{VanterlerOliveira2}] \label{Thm1}
\label{T2} If $x\in C_{1-\varepsilon ,\Psi }^{1}(\Delta,\mathbb{R}),\,\,0<\mu \leq 1
$ and $0\leq \nu \leq 1$, then 
\begin{equation*}
\mathbf{I}_{a^{+}}^{\mu ,\psi }{}^{H}{\mathfrak{D}}_{a^{+}}^{\mu ,\nu ,\Psi
}x(t)=x(t)-\frac{(\Psi (t)-\Psi (a))^{\varepsilon -1}}{\Gamma (\varepsilon )}%
\mathbf{I}_{a^{+}}^{(1-\nu )(1-\mu ),\Psi }x(a).
\end{equation*}
\end{theorem}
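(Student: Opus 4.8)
The plan is to reduce the asserted identity to the classical composition rule between the $\Psi$--Riemann--Liouville fractional \emph{integral} and the $\Psi$--Riemann--Liouville fractional \emph{derivative} of order $\varepsilon$, after first recognizing the $\Psi$--H derivative as precisely such an integral composed with such a derivative. The only analytic inputs I would use are the semigroup property of the $\Psi$--Riemann--Liouville integrals, the left-inverse relation $\mathfrak{D}^{\alpha}_{a^{+}}\mathbf{I}^{\alpha}_{a^{+}}=\mathrm{Id}$, and the fundamental theorem of calculus in the $\Psi$--variable; all of these are available for functions in $C_{1-\varepsilon ,\Psi }^{1}(\Delta ,\mathbb{R})$.

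First I would rewrite the left-hand side. Since $0<\mu\le 1$ we have $n=1$ in \eqref{P8}, and the elementary bookkeeping $(1-\nu)(1-\mu)=1-\varepsilon$, $\nu(1-\mu)=\varepsilon-\mu$ turns \eqref{P8} into
\begin{equation*}
{}^{\mathbf{H}}\mathfrak{D}_{a^{+}}^{\mu ,\nu ,\Psi }x(t)=\mathbf{I}_{a^{+}}^{\varepsilon -\mu ,\Psi }\Bigl(\frac{1}{\Psi ^{\prime }(t)}\frac{d}{dt}\,\mathbf{I}_{a^{+}}^{1-\varepsilon ,\Psi }x(t)\Bigr)=\mathbf{I}_{a^{+}}^{\varepsilon -\mu ,\Psi }\,\mathfrak{D}_{a^{+}}^{\varepsilon ,\Psi }x(t),
\end{equation*}
where $\mathfrak{D}_{a^{+}}^{\varepsilon ,\Psi }:=\bigl(\tfrac{1}{\Psi ^{\prime }}\tfrac{d}{dt}\bigr)\mathbf{I}_{a^{+}}^{1-\varepsilon ,\Psi }$ is the $\Psi$--Riemann--Liouville derivative of order $\varepsilon\in(0,1]$. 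Applying $\mathbf{I}_{a^{+}}^{\mu ,\Psi }$ to both sides and invoking the semigroup law $\mathbf{I}_{a^{+}}^{\mu ,\Psi }\mathbf{I}_{a^{+}}^{\varepsilon -\mu ,\Psi }=\mathbf{I}_{a^{+}}^{\varepsilon ,\Psi }$ (here $\mu+(\varepsilon-\mu)=\varepsilon$), the theorem is reduced to the single identity
\begin{equation*}
\mathbf{I}_{a^{+}}^{\varepsilon ,\Psi }\,\mathfrak{D}_{a^{+}}^{\varepsilon ,\Psi }x(t)=x(t)-\frac{(\Psi (t)-\Psi (a))^{\varepsilon -1}}{\Gamma (\varepsilon )}\,\mathbf{I}_{a^{+}}^{1-\varepsilon ,\Psi }x(a).
\end{equation*}

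To prove this last identity I would set $g:=\mathbf{I}_{a^{+}}^{1-\varepsilon ,\Psi }x$, so that $\mathfrak{D}_{a^{+}}^{\varepsilon ,\Psi }x=\tfrac{1}{\Psi ^{\prime }}g^{\prime }$, and combine three facts: the factorization $\mathbf{I}_{a^{+}}^{\varepsilon ,\Psi }=\mathfrak{D}_{a^{+}}^{1-\varepsilon ,\Psi }\mathbf{I}_{a^{+}}^{1,\Psi }$ (which follows from $\mathbf{I}_{a^{+}}^{1,\Psi }=\mathbf{I}_{a^{+}}^{1-\varepsilon ,\Psi }\mathbf{I}_{a^{+}}^{\varepsilon ,\Psi }$ together with $\mathfrak{D}_{a^{+}}^{1-\varepsilon ,\Psi }\mathbf{I}_{a^{+}}^{1-\varepsilon ,\Psi }=\mathrm{Id}$); the fundamental theorem of calculus in the form $\mathbf{I}_{a^{+}}^{1,\Psi }\bigl(\tfrac{1}{\Psi ^{\prime }}g^{\prime }\bigr)(t)=g(t)-g(a)$; and the value $\mathfrak{D}_{a^{+}}^{1-\varepsilon ,\Psi }[1](t)=\tfrac{(\Psi (t)-\Psi (a))^{\varepsilon -1}}{\Gamma (\varepsilon )}$. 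Chaining them gives $\mathbf{I}_{a^{+}}^{\varepsilon ,\Psi }\mathfrak{D}_{a^{+}}^{\varepsilon ,\Psi }x=\mathfrak{D}_{a^{+}}^{1-\varepsilon ,\Psi }(g-g(a))=x-g(a)\,\tfrac{(\Psi (t)-\Psi (a))^{\varepsilon -1}}{\Gamma (\varepsilon )}$, which is exactly what is wanted since $g(a)=\mathbf{I}_{a^{+}}^{1-\varepsilon ,\Psi }x(a)$. (Alternatively, one may quote the $\Psi$--Riemann--Liouville composition rule from \cite{VanterlerOliveira2} verbatim and skip this step.)

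I expect the main obstacle to be not the algebra but the functional-analytic justifications underlying it: one must check that $g=\mathbf{I}_{a^{+}}^{1-\varepsilon ,\Psi }x$ is absolutely continuous on $\Delta$ (so that $g^{\prime }$ exists a.e.\ and $\tfrac{1}{\Psi ^{\prime }}g^{\prime }\in L^{1}$, making the fundamental-theorem step legitimate), and that the semigroup and left-inverse relations used above really hold for the class at hand. This is precisely where the weighted space $C_{1-\varepsilon ,\Psi }^{1}(\Delta ,\mathbb{R})$ enters: the weight $(\Psi (t)-\Psi (a))^{1-\varepsilon }$ compensates the singular kernel $\mathcal{G}_{\Psi }^{\varepsilon }(t,s)$, whose factor $(\Psi (t)-\Psi (s))^{\varepsilon -1}$ has nonpositive exponent $\varepsilon-1\le 0$, so that every fractional integral in sight converges and the boundary contribution at $s=a$ produces exactly the stated power-type remainder. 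Everything else is routine computation with the kernel $\mathcal{G}_{\Psi }^{\mu }$.
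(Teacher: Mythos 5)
The paper does not prove this statement itself: it is quoted from \cite{VanterlerOliveira2} and used as a known tool, so there is no internal proof to compare against. Your argument is correct and is essentially the standard proof from that reference: rewriting ${}^{\mathbf{H}}\mathfrak{D}_{a^{+}}^{\mu,\nu,\Psi}=\mathbf{I}_{a^{+}}^{\varepsilon-\mu,\Psi}\,\mathfrak{D}_{a^{+}}^{\varepsilon,\Psi}$ via $(1-\nu)(1-\mu)=1-\varepsilon$ and $\nu(1-\mu)=\varepsilon-\mu$, applying the semigroup law $\mathbf{I}_{a^{+}}^{\mu,\Psi}\mathbf{I}_{a^{+}}^{\varepsilon-\mu,\Psi}=\mathbf{I}_{a^{+}}^{\varepsilon,\Psi}$, and then the $\Psi$--Riemann--Liouville composition identity of order $\varepsilon$, with the hypothesis $x\in C^{1}_{1-\varepsilon,\Psi}(\Delta,\mathbb{R})$ ensuring that $g=\mathbf{I}_{a^{+}}^{1-\varepsilon,\Psi}x$ is differentiable so your fundamental-theorem step (and the harmless degenerate case $\nu=0$, where $\varepsilon=\mu$) is legitimate.
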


\begin{theorem} [\cite{VanterlerOliveira2}] \label{Thm2}
\label{T3} Let $x\in C_{1-\gamma ,\Psi }^{1}(\Delta,\mathbb{R}),~\mu >0$ and $%
0\leq \nu \leq 1$, then we have 
\begin{equation*}
{\mathfrak{D}}_{a^{+}}^{\mu ,\nu ,\Psi }\mathbf{I}_{a^{+}}^{(1-\nu )(1-\mu
),\Psi }x(t)=x(t).
\end{equation*}
\end{theorem}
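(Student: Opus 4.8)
\emph{Proof sketch (plan).} The identity is of ``left--inverse'' type --- a $\Psi$-Hilfer derivative applied to a $\Psi$-Riemann--Liouville integral --- so my plan is to insert the integral into the definition~\eqref{P8} of ${}^{\mathbf{H}}\mathfrak{D}_{a^{+}}^{\mu,\nu,\Psi}$ and collapse the resulting composition. Writing $n-1<\mu\le n$, the definition gives ${}^{\mathbf{H}}\mathfrak{D}_{a^{+}}^{\mu,\nu,\Psi}g=\mathbf{I}_{a^{+}}^{\nu(n-\mu),\Psi}\left(\frac{1}{\Psi'(t)}\frac{d}{dt}\right)^{n}\mathbf{I}_{a^{+}}^{(1-\nu)(n-\mu),\Psi}g$. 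Substituting the prescribed fractional integral of $x$ for $g$ and merging the two adjacent $\Psi$-Riemann--Liouville integrals by the semigroup law $\mathbf{I}_{a^{+}}^{\alpha,\Psi}\mathbf{I}_{a^{+}}^{\beta,\Psi}=\mathbf{I}_{a^{+}}^{\alpha+\beta,\Psi}$ reduces the left-hand side to $\mathbf{I}_{a^{+}}^{\nu(n-\mu),\Psi}\left(\frac{1}{\Psi'(t)}\frac{d}{dt}\right)^{n}\mathbf{I}_{a^{+}}^{\rho,\Psi}x$ for a single exponent $\rho$. I would then apply $\left(\frac{1}{\Psi'(t)}\frac{d}{dt}\right)^{n}$ --- which turns $\mathbf{I}_{a^{+}}^{\rho,\Psi}$ into the $\Psi$-Riemann--Liouville fractional derivative of order $n-\rho$ --- and merge once more with the outer integral $\mathbf{I}_{a^{+}}^{\nu(n-\mu),\Psi}$; the whole point of the exponent arithmetic, powered by the relation $\varepsilon=\mu+\nu(n-\mu)$, is that the surviving fractional order collapses to $0$, modulo the boundary term $\dfrac{(\Psi(t)-\Psi(a))^{\varepsilon-1}}{\Gamma(\varepsilon)}\,\mathbf{I}_{a^{+}}^{\kappa,\Psi}x(a)$ already familiar from Theorem~\ref{Thm1}, which vanishes because $\kappa$ exceeds the weight $1-\gamma$, so the $\Psi$-Riemann--Liouville integral absorbs the admissible singularity of $x$ at $t=a$; what is left is $\mathbf{I}_{a^{+}}^{0,\Psi}x=x$.

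A tidier variant, which I would probably prefer to write up because it sidesteps differentiating fractional integrals by hand, is to deduce the identity straight from Theorem~\ref{Thm1}. I would apply $\mathbf{I}_{a^{+}}^{\mu,\Psi}$ to the left-hand side; since $x\in C_{1-\gamma,\Psi}^{1}(\Delta,\mathbb{R})$ forces its fractional integral into the class on which Theorem~\ref{Thm1} is valid, that theorem rewrites the result as
\[
\mathbf{I}_{a^{+}}^{\mu,\Psi}x(t)-\frac{(\Psi(t)-\Psi(a))^{\varepsilon-1}}{\Gamma(\varepsilon)}\,\mathbf{I}_{a^{+}}^{\kappa,\Psi}x(a),\qquad \kappa>0.
\]
Evaluating the $\Psi$-Riemann--Liouville integral at the left endpoint gives $0$ (the interval of integration degenerates while the kernel $\mathcal{G}_{\Psi}^{\kappa}(t,s)$ stays integrable and dominates the weight of $x$), so the subtracted term drops and $\mathbf{I}_{a^{+}}^{\mu,\Psi}$ applied to the left-hand side equals $\mathbf{I}_{a^{+}}^{\mu,\Psi}x$; cancelling the injective operator $\mathbf{I}_{a^{+}}^{\mu,\Psi}$ then yields the conclusion.

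I expect the only real difficulty to be the ``fractional fine print,'' not any conceptual step. In the direct route that means the rigorous justification of $\left(\frac{1}{\Psi'(t)}\frac{d}{dt}\right)^{n}\mathbf{I}_{a^{+}}^{\rho,\Psi}x=\mathbf{I}_{a^{+}}^{\rho-n,\Psi}x$ (differentiation under the fractional integral sign), which is precisely where the hypothesis $x\in C_{1-\gamma,\Psi}^{1}$ is used; in the Theorem~\ref{Thm1} route it is the two auxiliary facts --- that $\mathbf{I}_{a^{+}}^{\mu,\Psi}$ carries $C_{1-\gamma,\Psi}$ into the domain of Theorem~\ref{Thm1} and is injective on $C_{1-\gamma,\Psi}(\Delta,\mathbb{R})$, and that the endpoint term vanishes --- each of which comes down to the exact definition of the weighted space $C_{1-\gamma,\Psi}$ and to the behaviour of the kernel $\mathcal{G}_{\Psi}^{\mu}(t,s)$ near $s=t$ and near $t=a$. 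Once those are recorded, no new idea is needed.
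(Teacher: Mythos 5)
The paper itself offers no proof of this statement---it is imported verbatim from \cite{VanterlerOliveira2}---so your sketch has to stand on its own, and there it has a genuine gap: the exponent arithmetic you defer is exactly where the argument breaks for the identity as printed. Take $n=1$ and insert $g=\mathbf{I}_{a^{+}}^{(1-\nu)(1-\mu),\Psi}x$ into the definition \eqref{P8}: the semigroup law gives the inner order $\rho=2(1-\nu)(1-\mu)$, and after one application of $\tfrac{1}{\Psi'(t)}\tfrac{d}{dt}$ and the outer $\mathbf{I}_{a^{+}}^{\nu(1-\mu),\Psi}$ the surviving order is $\nu(1-\mu)+2(1-\nu)(1-\mu)-1=(1-\mu)(2-\nu)-1$, which is not $0$ unless $(1-\mu)(2-\nu)=1$. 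So the claimed ``collapse to $0$'' fails, and indeed the literal statement is false in general: for $\nu=1$ the inner integral has order $0$ and the claim becomes ${}^{\mathbf{H}}\mathfrak{D}_{a^{+}}^{\mu,1,\Psi}x=x$, already wrong for $x\equiv 1$. The identity that is true---the one in the cited reference, and the one the paper actually uses when passing from Eq.~\eqref{I1} to the integral equation \eqref{sol1}---is ${}^{\mathbf{H}}\mathfrak{D}_{a^{+}}^{\mu,\nu,\Psi}\mathbf{I}_{a^{+}}^{\mu,\Psi}x=x$; the exponent $(1-\nu)(1-\mu)$ in the theorem as printed is a transcription error. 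Your second route silently assumes this corrected version: applying Theorem~\ref{Thm1} to $g=\mathbf{I}_{a^{+}}^{(1-\nu)(1-\mu),\Psi}x$ produces the principal term $g$ itself, i.e.\ $\mathbf{I}_{a^{+}}^{(1-\nu)(1-\mu),\Psi}x$, not the $\mathbf{I}_{a^{+}}^{\mu,\Psi}x$ you wrote, so the cancellation against $\mathbf{I}_{a^{+}}^{\mu,\Psi}$ of the right-hand side never happens. As written, neither route proves the statement you were given; you needed either to flag and fix the order of the integral or to run the arithmetic and discover the inconsistency.

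For the corrected statement your first route is the standard proof and does go through: the semigroup law gives $\mathbf{I}_{a^{+}}^{(1-\nu)(1-\mu)+\mu,\Psi}x=\mathbf{I}_{a^{+}}^{1-\nu(1-\mu),\Psi}x$, one $\Psi$-differentiation turns this into the $\Psi$-Riemann--Liouville derivative of order $\nu(1-\mu)$ of $x$, and composing with the outer $\mathbf{I}_{a^{+}}^{\nu(1-\mu),\Psi}$ returns $x$ minus the boundary term $\dfrac{(\Psi(t)-\Psi(a))^{\nu(1-\mu)-1}}{\Gamma(\nu(1-\mu))}\,\mathbf{I}_{a^{+}}^{1-\nu(1-\mu),\Psi}x(a)$, which vanishes precisely by the weight comparison you describe, since $1-\nu(1-\mu)=(1-\gamma)+\mu>1-\gamma$. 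The remaining fine print you list (differentiation under the fractional integral for $x\in C^{1}_{1-\gamma,\Psi}$, and, in the second route, the mapping property needed to invoke Theorem~\ref{Thm1} for $g$ together with the injectivity of $\mathbf{I}_{a^{+}}^{\mu,\Psi}$) is real but standard; the substantive issue is the order mismatch above.
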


\subsection{ Weakly Picard Operator}	
 
The outcomes examined in this paper, are around the weakly Picard operator and some results derived from it. In this sense, we present a few definitions, lemmas and theorems in which they are fundamental for the development of the article and for that, we utilize the accompanying works  {\rm \cite{prin2,pri1}}.

Let $(\Omega ,d)$ be a metric space and $\mathbf{T}:\Omega \rightarrow \Omega $ be an operator. We denote by:
\begin{itemize}
\item [(a)] $\mathbf{F}_{\mathbf{T}}:=\{x\in \Omega  : \mathbf{T}\left( x\right) =x\}$ the fixed points set of $\mathbf{T}$;
\item [(b)] $\mathbf{I}_{\mathbf{T}}:=\left\{ Y\subset \Omega :\mathbf{T}\left( Y\right) \subset Y,\,\,\,Y\neq \varnothing \right\} $ the family of the nonempty invariant subsets of $\mathbf{T}$;
\item [(c)]  $\mathbf{T}^{n+1}=\mathbf{T}\circ \mathbf{T}^{n},\,\,\,\mathbf{T}^{0}=1_{x},\,\,\,\mathbf{T}^{1}=\mathbf{T},n\in \mathbb{N}$ the iterate operators of the operator $\mathbf{T}$.
\end{itemize}
\begin{definition}{\rm \cite{prin2,pri1}}  An operator $\mathbf{T}:\Omega \rightarrow \Omega $ is a Picard operator, if there exists $x^{\ast }\in \Omega $ satisfying the following conditions:
\begin{flushleft}
\begin{tabular}{cl}
{\rm (a)} & $\mathbf{F}_{\mathbf{T}}=\{x^{\ast }\}$; \\ 
{\rm (b)} & the sequence $\left( \mathbf{T}^{n}(x_{0})\right) _{n\in \mathbb{N}}$ converges to $x^{\ast }$ for all $x_{0}\in \Omega $.
\end{tabular}
\end{flushleft}
\end{definition}

\begin{definition} {\rm\cite{prin2,pri1}} An operator $\mathbf{T}:\Omega \rightarrow \Omega$ is a weakly Picard operator if the sequence $(\mathbf{T}^{n}(x))_{n\in \mathbb{N}}$ converges for all $x\in \Omega $ and its limit is a fixed point of $\mathbf{T}$.
\end{definition}

\begin{definition}{\rm\cite{prin2,pri1}} If $\mathbf{T}$ is a weakly Picard operator, then we consider the operator $\mathbf{T}^{\infty }$ defined by 
\begin{equation*}
\mathbf{T}^{\infty }:\Omega \rightarrow \Omega ,\quad \mathbf{T}^{\infty}(x)=\lim_{n\rightarrow \infty }\mathbf{T}^{n}(x).
\end{equation*}
Note that $A^\infty (\Omega)=\mathbf{F}_{\mathbf{T}}$.
\end{definition}

\begin{lemma} {\rm\cite{prin2,pri1}}\label{k1}
Let $(\Omega, d,\leq)$ be an ordered metric space and $\mathbf{T}: \Omega \to \Omega $ an operator. We suppose that:
\begin{enumerate}
\item[(i)] $\mathbf{T}$ is Weakly Picard Operator;
\item[(ii)] $\mathbf{T}$ is increasing.
\end{enumerate}
Then, the operator $\mathbf{T}^{\infty}$ is increasing.
\end{lemma}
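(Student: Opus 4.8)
The plan is to prove that $\mathbf{T}^{\infty}$ is increasing by exploiting the pointwise convergence $\mathbf{T}^{n}(x)\to\mathbf{T}^{\infty}(x)$ together with the monotonicity of each iterate $\mathbf{T}^{n}$, and then passing to the limit while using the fact that the order relation $\leq$ on $\Omega$ is compatible with the metric $d$ (i.e.\ the order is closed: if $y_n\leq z_n$, $y_n\to y$ and $z_n\to z$, then $y\leq z$). This closedness is the standard hypothesis implicit in ``ordered metric space'' in the Picard-operator literature of \cite{prin2,pri1}, and it is the only ingredient beyond (i) and (ii) that the argument needs.

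First I would observe that since $\mathbf{T}$ is increasing, an easy induction shows that every iterate $\mathbf{T}^{n}$ is increasing: indeed $\mathbf{T}^{0}=1_{\Omega}$ is trivially increasing, and if $\mathbf{T}^{n}$ is increasing then for $x\leq y$ we have $\mathbf{T}^{n}(x)\leq \mathbf{T}^{n}(y)$, hence $\mathbf{T}^{n+1}(x)=\mathbf{T}(\mathbf{T}^{n}(x))\leq \mathbf{T}(\mathbf{T}^{n}(y))=\mathbf{T}^{n+1}(y)$ by the monotonicity of $\mathbf{T}$. Next, fix $x,y\in\Omega$ with $x\leq y$. By the previous step, $\mathbf{T}^{n}(x)\leq \mathbf{T}^{n}(y)$ for every $n\in\mathbb{N}$. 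Since $\mathbf{T}$ is a weakly Picard operator, the sequences $(\mathbf{T}^{n}(x))_{n}$ and $(\mathbf{T}^{n}(y))_{n}$ converge, by definition of $\mathbf{T}^{\infty}$, to $\mathbf{T}^{\infty}(x)$ and $\mathbf{T}^{\infty}(y)$ respectively.

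Finally I would pass to the limit: applying the closedness of the order relation to the inequalities $\mathbf{T}^{n}(x)\leq \mathbf{T}^{n}(y)$ with $\mathbf{T}^{n}(x)\to \mathbf{T}^{\infty}(x)$ and $\mathbf{T}^{n}(y)\to \mathbf{T}^{\infty}(y)$ yields $\mathbf{T}^{\infty}(x)\leq \mathbf{T}^{\infty}(y)$. Since $x\leq y$ were arbitrary, $\mathbf{T}^{\infty}$ is increasing, which is the claim.

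The argument is short, and the only genuinely delicate point — the ``main obstacle'' in the sense that it is what one must not overlook — is the justification that one may take limits inside the inequality. This is not automatic for an arbitrary order on a metric space; it requires that the graph $\{(y,z):y\leq z\}$ be closed in $\Omega\times\Omega$. In the framework of \cite{prin2,pri1} this is part of the working definition of an ordered metric space, so I would invoke it explicitly rather than leave it tacit. Everything else — the induction on $n$ and the identification of the limits with $\mathbf{T}^{\infty}(x)$ and $\mathbf{T}^{\infty}(y)$ — is routine and follows directly from the definitions recalled just above the statement.
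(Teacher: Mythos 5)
Your argument is correct: the induction showing each $\mathbf{T}^{n}$ is increasing, the identification of the limits of $(\mathbf{T}^{n}(x))_{n}$ and $(\mathbf{T}^{n}(y))_{n}$ with $\mathbf{T}^{\infty}(x)$ and $\mathbf{T}^{\infty}(y)$, and the passage to the limit in the inequality are exactly the standard proof, and you rightly make explicit the one nontrivial ingredient, namely that the order relation is closed under convergence, which in the cited Picard-operator framework of \cite{prin2,pri1} is built into the meaning of ``ordered ($L$-)metric space.'' Note that the paper itself gives no proof of this lemma --- it is quoted from \cite{prin2,pri1} --- so there is nothing in the paper to compare against; your proof coincides with the one found in that literature.
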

\begin{lemma} {\rm\cite{prin2,pri1}} \label{k2}
(Abstract Gronwall lemma). Let $(\Omega, d,\leq)$ be an ordered metric space and $\mathbf{T}: \Omega \to \Omega $ an operator. We suppose that:
\begin{enumerate}
\item[(i)] $\mathbf{T}$ is Weakly Picard Operator;
\item[(ii)] $\mathbf{T}$ is increasing.
\end{enumerate}
If we denote by $x^{*}_{\mathbf{T}}$ the unique fixed point of $\mathbf{T}$, then:
\begin{enumerate}
\item[(a)] $x\leq \mathbf{T}(x)\, \Rightarrow \, x\leq {x_{ \mathbf{T}}^{*}}; $
\item[(b)]$x\geq\mathbf{T}(x)\, \Rightarrow \, x\geq {x_{ \mathbf{T}}^{*}} .$
\end{enumerate}
\end{lemma}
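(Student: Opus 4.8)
The plan is to prove items (a) and (b) by iterating the monotonicity of $\mathbf{T}$ and then passing to the limit along the Picard iterates. I will treat (a) in detail; part (b) is entirely symmetric, with all inequalities reversed.

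First I would exploit hypothesis (ii) together with the assumption $x\leq \mathbf{T}(x)$. Since $\mathbf{T}$ is increasing, $x\leq \mathbf{T}(x)$ yields $\mathbf{T}(x)\leq \mathbf{T}^{2}(x)$, and a straightforward induction produces the monotone chain $x\leq \mathbf{T}(x)\leq \mathbf{T}^{2}(x)\leq \cdots \leq \mathbf{T}^{n}(x)\leq \cdots$; in particular $x\leq \mathbf{T}^{n}(x)$ for every $n\in \mathbb{N}$.

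Next I would bring in hypothesis (i). Because $\mathbf{T}$ is a weakly Picard operator, the sequence $\left(\mathbf{T}^{n}(x)\right)_{n\in\mathbb{N}}$ converges and its limit $\mathbf{T}^{\infty}(x)$ is a fixed point of $\mathbf{T}$. Since $\mathbf{T}$ is assumed to have a unique fixed point $x^{*}_{\mathbf{T}}$, this forces $\mathbf{T}^{\infty}(x)=x^{*}_{\mathbf{T}}$ (this is precisely $\mathbf{T}^{\infty}(\Omega)=\mathbf{F}_{\mathbf{T}}$ in the singleton case; one could also invoke Lemma \ref{k1}, which gives that $\mathbf{T}^{\infty}$ is increasing, although its monotonicity is not actually required here). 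Hence $\mathbf{T}^{n}(x)\to x^{*}_{\mathbf{T}}$ as $n\to\infty$.

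Finally I would combine the two facts: from $x\leq \mathbf{T}^{n}(x)$ for all $n$ and $\mathbf{T}^{n}(x)\to x^{*}_{\mathbf{T}}$, passing to the limit in the inequality (with the constant sequence $x$ on the left) gives $x\leq x^{*}_{\mathbf{T}}$, which is (a). For (b), the same scheme applied to $\mathbf{T}(x)\leq x$ yields the decreasing chain $\cdots \leq \mathbf{T}^{n}(x)\leq \cdots \leq \mathbf{T}(x)\leq x$ and, in the limit, $x^{*}_{\mathbf{T}}\leq x$. The only delicate point I anticipate is this last limiting step: it relies on the compatibility of the partial order with the metric, i.e. that $\{(u,v)\in\Omega\times\Omega: u\leq v\}$ is closed, so that inequalities survive under passage to the limit. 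This closedness is part of the standing notion of ordered metric space in the Picard-operator framework of \cite{prin2,pri1}, and I would state it explicitly as the structural hypothesis being used.
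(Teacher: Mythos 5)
Your argument is correct: the paper itself states this lemma without proof, quoting it from \cite{prin2,pri1}, and your proof is precisely the standard one from that Picard-operator literature — monotone iteration $x\leq \mathbf{T}^{n}(x)$, convergence of the iterates to the fixed point by the weakly Picard property (equal to $x^{*}_{\mathbf{T}}$ by uniqueness), and passage to the limit in the inequality. You also correctly identify the one structural hypothesis this limiting step needs, namely that the order relation is closed with respect to the metric, which is indeed part of the notion of ordered metric space ($L$-space with compatible order) used in that framework, so nothing is missing.
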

\begin{lemma} {\rm\cite{prin2,pri1}} \label{k4}
(Abstract comparison lemma). Let $(\Omega, \mathbf{d},\leq)$ be an ordered metric space and $\mathbf{T}_{1},\mathbf{T}_{2},\mathbf{T}_{3}: \Omega \to \Omega $ be such that:
\begin{enumerate}
\item[(i)] the operator $\mathbf{T}_{1},\mathbf{T}_{2},\mathbf{T}_{3}$ are Weakly Picard Operators ;
\item[(ii)]$\mathbf{T}_{1}\leq \mathbf{T}_{2}\leq \mathbf{T}_{3}$;
\item[(iii)] the operator $ \mathbf{T}_{2}$ is increasing.
\end{enumerate}
Then $ x \leq y \leq z$ implies that $\mathbf{T}^{\infty}_{1}(x)\leq \mathbf{T}^{\infty}_{2}(y) \leq \mathbf{T}^{\infty}_{3}(z)$.
\end{lemma}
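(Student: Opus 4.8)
The plan is to peel the statement apart into two independent ``comparison chains'' --- one between the iterates of $\mathbf{T}_1$ and $\mathbf{T}_2$, the other between those of $\mathbf{T}_2$ and $\mathbf{T}_3$ --- prove each by a short induction, and then pass to the limit using that each operator is weakly Picard.

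First I would show, by induction on $n$, that $x\leq y$ implies $\mathbf{T}_1^{n}(x)\leq \mathbf{T}_2^{n}(y)$ for every $n\in\mathbb{N}$. The case $n=0$ is exactly the hypothesis, since $\mathbf{T}_1^{0}$ and $\mathbf{T}_2^{0}$ are the identity. Assuming $\mathbf{T}_1^{n}(x)\leq \mathbf{T}_2^{n}(y)$, monotonicity of $\mathbf{T}_2$ (hypothesis (iii)) gives $\mathbf{T}_2\big(\mathbf{T}_1^{n}(x)\big)\leq \mathbf{T}_2\big(\mathbf{T}_2^{n}(y)\big)$, while hypothesis (ii) evaluated at the point $\mathbf{T}_1^{n}(x)$ gives $\mathbf{T}_1\big(\mathbf{T}_1^{n}(x)\big)\leq \mathbf{T}_2\big(\mathbf{T}_1^{n}(x)\big)$. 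Chaining these,
\[
\mathbf{T}_1^{n+1}(x)=\mathbf{T}_1\big(\mathbf{T}_1^{n}(x)\big)\leq \mathbf{T}_2\big(\mathbf{T}_1^{n}(x)\big)\leq \mathbf{T}_2\big(\mathbf{T}_2^{n}(y)\big)=\mathbf{T}_2^{n+1}(y).
\]
An entirely analogous induction --- this time combining the monotonicity of $\mathbf{T}_2$ with the inequality $\mathbf{T}_2\leq \mathbf{T}_3$ from (ii) --- shows that $y\leq z$ implies $\mathbf{T}_2^{n}(y)\leq \mathbf{T}_3^{n}(z)$ for all $n$.

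Next I would let $n\to\infty$. Since $\mathbf{T}_1,\mathbf{T}_2,\mathbf{T}_3$ are weakly Picard operators, the sequences $\big(\mathbf{T}_1^{n}(x)\big)_n$, $\big(\mathbf{T}_2^{n}(y)\big)_n$, $\big(\mathbf{T}_3^{n}(z)\big)_n$ converge in $(\Omega,\mathbf{d})$ to $\mathbf{T}_1^{\infty}(x)$, $\mathbf{T}_2^{\infty}(y)$, $\mathbf{T}_3^{\infty}(z)$, respectively. Taking limits in the two chains of inequalities just established then yields $\mathbf{T}_1^{\infty}(x)\leq \mathbf{T}_2^{\infty}(y)\leq \mathbf{T}_3^{\infty}(z)$, which is the desired conclusion.

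The one delicate point --- and the step I would single out as the crux --- is this final passage of an order relation through a metric limit: from $a_n\leq b_n$, $a_n\to a$ and $b_n\to b$ one must be allowed to conclude $a\leq b$. This holds precisely when the graph $\{(u,v)\in\Omega\times\Omega : u\leq v\}$ is closed in the product topology, which is part of the working notion of ordered metric space in the Picard-operator framework used here \cite{prin2,pri1}; I would invoke it explicitly. Everything else reduces to the two routine inductions above.
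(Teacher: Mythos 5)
Your proof is correct: the two short inductions (using only the monotonicity of $\mathbf{T}_2$ together with the pointwise inequalities $\mathbf{T}_1\leq \mathbf{T}_2\leq \mathbf{T}_3$) plus passage to the limit are exactly the standard argument, and you rightly isolate the one real requirement, namely that the order $\leq$ be closed under convergence in $(\Omega,\mathbf{d})$, which is part of what ``ordered metric space'' means in the Picard-operator framework of \cite{prin2,pri1}. Note that the paper itself states this lemma without proof, merely citing \cite{prin2,pri1}; your argument coincides with the proof given in those references.
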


\begin{definition} {\rm\cite{prin2,pri1}} Let $\mathbf{T}:\Omega \rightarrow \Omega $ be a weakly Picard operator and $c\in \mathbb{R}_{+}^{\ast }$. The operator $\mathbf{T}$ is $c$-weakly Picard operator if, and only if: 
\begin{equation}
\mathbf{d}(x,\mathbf{T}^{\infty }(x))\leq c\,\mathbf{d}(x,\mathbf{T}(x)),~ x\in \Omega .
\end{equation}
For the c-Picard Operators and c-Weakly Picard Operators we have the following lemma.
\end{definition}
\begin{lemma}  {\rm\cite{prin2,pri1}} \label{k5}
Let $(\Omega, \mathbf{d})$ be a metric space  and $\mathbf{T}_{1},\mathbf{T}_{2}: \Omega \to \Omega $ be two operators. We suppose that:
\begin{enumerate}
\item[(i)] $\mathbf{T}_{1}$ is $c-$ PO with $\textbf{F}_{\mathbf{T}_{1}}= {x^{*}_{\mathbf{T}_{1}}}$;
\item[(ii)]there exists $\eta \in \mathbb{R}^{*}_{+}$ such that $\mathbf{d}(\mathbf{T}_{1}(x), \mathbf{T}_{2}(x)) \leq \eta ,\, \forall x \in \Omega. $
\end{enumerate}
{\rm If} $x^{*}_{\mathbf{T}_{2}}\in \textbf{F}_{\mathbf{T}_{2}}$, then $d ( x^{*}_{\mathbf{T}_{2}},x^{*}_{\mathbf{T}_{1}}) \leq c\,\eta.$
\end{lemma}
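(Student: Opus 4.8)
The plan is to exploit the defining inequality of a $c$-weakly Picard operator together with the fact that a $c$-Picard operator has a \emph{unique} fixed point, so that its associated operator $\mathbf{T}_{1}^{\infty}$ is the constant map $x\mapsto x^{*}_{\mathbf{T}_{1}}$. Concretely, from hypothesis (i) and the definition of a $c$-weakly Picard operator applied to $\mathbf{T}_{1}$, for every $x\in\Omega$ we have $\mathbf{d}(x,\mathbf{T}_{1}^{\infty}(x))\leq c\,\mathbf{d}(x,\mathbf{T}_{1}(x))$; since $\mathbf{F}_{\mathbf{T}_{1}}=\{x^{*}_{\mathbf{T}_{1}}\}$ and $\mathbf{T}_{1}^{\infty}(\Omega)=\mathbf{F}_{\mathbf{T}_{1}}$, this reads
\begin{equation*}
\mathbf{d}(x,x^{*}_{\mathbf{T}_{1}})\leq c\,\mathbf{d}(x,\mathbf{T}_{1}(x)),\qquad x\in\Omega.
\end{equation*}

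Next I would specialize this inequality to the point $x=x^{*}_{\mathbf{T}_{2}}$, which is legitimate since $x^{*}_{\mathbf{T}_{2}}\in\Omega$. Using that $x^{*}_{\mathbf{T}_{2}}$ is a fixed point of $\mathbf{T}_{2}$, i.e. $\mathbf{T}_{2}(x^{*}_{\mathbf{T}_{2}})=x^{*}_{\mathbf{T}_{2}}$, the right-hand side rewrites as
\begin{equation*}
\mathbf{d}\bigl(x^{*}_{\mathbf{T}_{2}},\mathbf{T}_{1}(x^{*}_{\mathbf{T}_{2}})\bigr)=\mathbf{d}\bigl(\mathbf{T}_{2}(x^{*}_{\mathbf{T}_{2}}),\mathbf{T}_{1}(x^{*}_{\mathbf{T}_{2}})\bigr),
\end{equation*}
and hypothesis (ii) bounds this last quantity by $\eta$. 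Chaining the two estimates yields $\mathbf{d}(x^{*}_{\mathbf{T}_{2}},x^{*}_{\mathbf{T}_{1}})\leq c\,\eta$, which is the assertion.

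There is essentially no serious obstacle here: the only point needing care is the bookkeeping identification $\mathbf{T}_{1}^{\infty}(x)=x^{*}_{\mathbf{T}_{1}}$ for \emph{all} $x$, which follows by combining the definition of a Picard operator (the iterates converge to the unique fixed point from every starting point) with the definition of $\mathbf{T}^{\infty}$ and the remark that $\mathbf{T}^{\infty}(\Omega)=\mathbf{F}_{\mathbf{T}}$. Once that is in place, the conclusion is a one-line substitution, so the proof is genuinely short; the content lies entirely in having set up the $c$-weakly Picard framework beforehand.
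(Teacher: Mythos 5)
Your argument is correct: it is exactly the standard proof of this data-dependence lemma, which the paper itself does not prove but only cites from \cite{prin2,pri1}. Applying the $c$-Picard inequality $\mathbf{d}(x,x^{*}_{\mathbf{T}_{1}})\leq c\,\mathbf{d}(x,\mathbf{T}_{1}(x))$ at $x=x^{*}_{\mathbf{T}_{2}}$ and then using $\mathbf{T}_{2}(x^{*}_{\mathbf{T}_{2}})=x^{*}_{\mathbf{T}_{2}}$ together with hypothesis (ii) gives the bound $c\,\eta$ in one line, just as in the cited references.
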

\begin{lemma} \label{k6}
Let $(\Omega, d)$ be a metric space  and $\mathbf{T}_{1},\mathbf{T}_{2}: \Omega \to \Omega $ be two operators. We suppose that:
\begin{enumerate}
\item[(i)] the operators $\mathbf{T}_{1}$ and  $\mathbf{T}_{2}$ are $c-$Weakly Picard Operators; 
\item[(ii)]there exists  $\eta \in \mathbb{R}^{*}_{+}$ such that $ \mathbf{d}(\mathbf{T}_{1}(x), \mathbf{T}_{2}(x)) \leq \eta ,\, \forall x \in \Omega. $
\end{enumerate}
\end{lemma}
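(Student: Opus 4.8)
The statement as displayed breaks off before the conclusion, but the natural assertion — the one that runs parallel to Lemma~\ref{k5} — is the Pompeiu–Hausdorff estimate
\[
H_d\big(\mathbf{F}_{\mathbf{T}_{1}},\mathbf{F}_{\mathbf{T}_{2}}\big)\le c\,\eta,
\]
where $H_d$ denotes the Pompeiu–Hausdorff functional induced by $d$. The plan is to prove this by reducing it to two one-sided distance bounds, each obtained from a single application of the $c$-weakly-Picard inequality together with hypothesis (ii).

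First I would record the two facts furnished by hypothesis (i): since $\mathbf{T}_{1}$ and $\mathbf{T}_{2}$ are weakly Picard operators, the operators $\mathbf{T}_{1}^{\infty}$ and $\mathbf{T}_{2}^{\infty}$ are well defined on all of $\Omega$ with $\mathbf{T}_{i}^{\infty}(\Omega)=\mathbf{F}_{\mathbf{T}_{i}}$, and the $c$-WPO property gives $d\big(x,\mathbf{T}_{i}^{\infty}(x)\big)\le c\,d\big(x,\mathbf{T}_{i}(x)\big)$ for every $x\in\Omega$ and $i=1,2$. Then I would fix an arbitrary $u\in\mathbf{F}_{\mathbf{T}_{1}}$, so that $\mathbf{T}_{1}(u)=u$; hypothesis (ii) yields $d\big(u,\mathbf{T}_{2}(u)\big)=d\big(\mathbf{T}_{1}(u),\mathbf{T}_{2}(u)\big)\le\eta$, and applying the $c$-WPO inequality for $\mathbf{T}_{2}$ at the point $u$ gives $d\big(u,\mathbf{T}_{2}^{\infty}(u)\big)\le c\,d\big(u,\mathbf{T}_{2}(u)\big)\le c\,\eta$. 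Since $\mathbf{T}_{2}^{\infty}(u)\in\mathbf{F}_{\mathbf{T}_{2}}$, this shows $d\big(u,\mathbf{F}_{\mathbf{T}_{2}}\big)\le c\,\eta$, and taking the supremum over $u\in\mathbf{F}_{\mathbf{T}_{1}}$ bounds one of the two one-sided excesses by $c\,\eta$.

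Interchanging the roles of $\mathbf{T}_{1}$ and $\mathbf{T}_{2}$ — now starting from $v\in\mathbf{F}_{\mathbf{T}_{2}}$ and using the $c$-WPO inequality for $\mathbf{T}_{1}$ — gives the symmetric bound $d\big(v,\mathbf{F}_{\mathbf{T}_{1}}\big)\le c\,\eta$, hence the other one-sided excess is also at most $c\,\eta$. Taking the maximum of the two yields $H_d(\mathbf{F}_{\mathbf{T}_{1}},\mathbf{F}_{\mathbf{T}_{2}})\le c\,\eta$, which is the desired conclusion. I do not expect any genuine obstacle here: the argument is a short triangle-inequality manipulation, and the only point requiring care is the bookkeeping — one must apply the $c$-WPO inequality for the operator at whose fixed point one is \emph{not} currently sitting, and one must remember that each inclusion is one-sided, so the correct object to estimate is the Pompeiu–Hausdorff functional rather than an ordinary distance. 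Note also that, in contrast to Lemmas~\ref{k1}–\ref{k4}, neither completeness of $\Omega$ nor any order structure is needed.
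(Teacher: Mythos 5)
Your proposal is correct, and there is nothing in the paper to compare it against: Lemma~\ref{k6} is stated there as a recalled preliminary without any proof, and indeed the statement itself is truncated --- the hypotheses are listed but the conclusion is missing. Your reconstruction of the conclusion, $H_{d}\bigl(\mathbf{F}_{\mathbf{T}_{1}},\mathbf{F}_{\mathbf{T}_{2}}\bigr)\leq c\,\eta$, is the right one: it is the standard companion of Lemma~\ref{k5} in the Picard operator literature, and it is exactly the form the paper needs when it invokes Lemma~\ref{k6} at the end of Theorem~\ref{t5} and, in Pompeiu--Hausdorff form, in Theorem~\ref{jr}. Your argument is the standard and correct one: for $u\in\mathbf{F}_{\mathbf{T}_{1}}$ one has $d\bigl(u,\mathbf{T}_{2}(u)\bigr)=d\bigl(\mathbf{T}_{1}(u),\mathbf{T}_{2}(u)\bigr)\leq\eta$, so the $c$-weakly Picard inequality for $\mathbf{T}_{2}$ gives $d\bigl(u,\mathbf{T}_{2}^{\infty}(u)\bigr)\leq c\,\eta$ with $\mathbf{T}_{2}^{\infty}(u)\in\mathbf{F}_{\mathbf{T}_{2}}$, and the symmetric estimate plus taking suprema yields the Pompeiu--Hausdorff bound; you are also right that no completeness or order structure is used, only that the fixed point sets are nonempty, which the weakly Picard property guarantees.
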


\begin{theorem}{\rm \cite{pri1,prin2}} \label{theorem1}
An operator $\mathbf{T}$ is a weakly Picard operator if, and only if, there exists a partition of  $\Omega $,~ $\Omega =\hspace{-0.3cm}
\begin{array}{c}
\bigcup  \\ 
\vspace{-0.4cm}{\lambda \in \Lambda }%
\end{array}%
\hspace{-0.2cm}\Omega _{{}_{\lambda }}$, such that:
\begin{flushleft}
\begin{tabular}{cl}
{\rm (a)} & $\Omega _{{}_{\lambda }}\in I(\mathbf{T}),~ \forall\, \lambda \in
\Lambda $; \\ 
{\rm (b)} & $\mathbf{T}|_{\Omega _{{}_{\lambda }}}:\Omega _{{}_{\lambda }}\rightarrow \Omega _{{}_{\lambda }}$ is a Picard operator,  $~ \forall\, \lambda
\in \Lambda $.
\end{tabular}
\end{flushleft}
\end{theorem}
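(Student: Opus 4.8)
The plan is to establish the two implications separately; each is elementary, and the whole argument turns on the single identity $\mathbf{T}^{\infty}\circ\mathbf{T}=\mathbf{T}^{\infty}$ valid for a weakly Picard operator.

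First I would handle sufficiency. Suppose $\Omega=\bigcup_{\lambda\in\Lambda}\Omega_{\lambda}$ is a partition satisfying (a) and (b), and fix $x\in\Omega$; choose $\lambda$ with $x\in\Omega_{\lambda}$. Since $\Omega_{\lambda}$ is invariant, an immediate induction gives $\mathbf{T}^{n}(x)\in\Omega_{\lambda}$ for every $n\in\mathbb{N}$, so the full orbit of $x$ remains in $\Omega_{\lambda}$. As $\mathbf{T}|_{\Omega_{\lambda}}$ is a Picard operator, $(\mathbf{T}^{n}(x))_{n\in\mathbb{N}}$ converges to the unique fixed point $x^{\ast}_{\lambda}$ of $\mathbf{T}|_{\Omega_{\lambda}}$, and $\mathbf{T}(x^{\ast}_{\lambda})=\mathbf{T}|_{\Omega_{\lambda}}(x^{\ast}_{\lambda})=x^{\ast}_{\lambda}$, so this limit is a fixed point of $\mathbf{T}$. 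Since $x$ was arbitrary, $\mathbf{T}$ is a weakly Picard operator.

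For necessity I would start from $\mathbf{T}$ weakly Picard, so that $\mathbf{T}^{\infty}(x):=\lim_{n\to\infty}\mathbf{T}^{n}(x)$ is defined for every $x\in\Omega$, lies in $\mathbf{F}_{\mathbf{T}}$, and satisfies $\mathbf{T}^{\infty}(\Omega)=\mathbf{F}_{\mathbf{T}}$. Put $\Lambda:=\mathbf{F}_{\mathbf{T}}$ and, for $x^{\ast}\in\mathbf{F}_{\mathbf{T}}$, let $\Omega_{x^{\ast}}:=(\mathbf{T}^{\infty})^{-1}(\{x^{\ast}\})$ be the fiber of $\mathbf{T}^{\infty}$ over $x^{\ast}$. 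Each $\Omega_{x^{\ast}}$ is nonempty, since it contains $x^{\ast}$ (because $\mathbf{T}^{\infty}(x^{\ast})=x^{\ast}$); the fibers are pairwise disjoint; and their union is $\Omega$ because $\mathbf{T}^{\infty}$ is everywhere defined. Hence $\{\Omega_{x^{\ast}}\}_{x^{\ast}\in\mathbf{F}_{\mathbf{T}}}$ is a partition of $\Omega$. For (a), observe $\mathbf{T}^{\infty}(\mathbf{T}(x))=\lim_{n\to\infty}\mathbf{T}^{n+1}(x)=\lim_{n\to\infty}\mathbf{T}^{n}(x)=\mathbf{T}^{\infty}(x)$, so $x\in\Omega_{x^{\ast}}$ forces $\mathbf{T}(x)\in\Omega_{x^{\ast}}$, that is, $\mathbf{T}(\Omega_{x^{\ast}})\subset\Omega_{x^{\ast}}$. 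For (b), on $\Omega_{x^{\ast}}$ every orbit converges to $x^{\ast}$ by construction, and if $y\in\Omega_{x^{\ast}}$ satisfies $\mathbf{T}(y)=y$ then $\mathbf{T}^{\infty}(y)=y$ while also $\mathbf{T}^{\infty}(y)=x^{\ast}$, whence $y=x^{\ast}$; thus $x^{\ast}$ is the unique fixed point of $\mathbf{T}$ in $\Omega_{x^{\ast}}$, and $\mathbf{T}|_{\Omega_{x^{\ast}}}$ is a Picard operator.

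The one point deserving care — and the real crux — is the invariance claim in (a): it rests on the elementary fact that a tail of a convergent sequence has the same limit, so that $\mathbf{T}^{\infty}$ is constant along orbits. I would stress that continuity of $\mathbf{T}$ is never invoked, because the definition of a weakly Picard operator already guarantees that $\mathbf{T}^{\infty}(x)$ is a fixed point; the remaining verifications (that the fibers of $\mathbf{T}^{\infty}$ genuinely partition $\Omega$, and that the restriction inherits uniqueness of the fixed point) are routine.
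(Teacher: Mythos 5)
Your proof is correct. Note that the paper itself gives no argument for this statement --- it is quoted from the cited references on Picard operator theory --- and your two-implication proof, with the partition in the necessity direction given by the fibers (attraction basins) $\Omega_{x^{\ast}}=(\mathbf{T}^{\infty})^{-1}(\{x^{\ast}\})$, $x^{\ast}\in\mathbf{F}_{\mathbf{T}}$, and the key identity $\mathbf{T}^{\infty}\circ\mathbf{T}=\mathbf{T}^{\infty}$, is exactly the classical argument found in those sources, so there is nothing to add.
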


For more details on the concept of Picard operator, Picard weak, Picard $c$-weak and its related fundamental work, we suggest \cite{pri1,prin2,Rus1,Rus2}.

\subsection{ $(\mu,\Psi,\xi )$--Bielecki type norm:}

The Mittag-Leffler functions of one, two and of different parameters are to be sure vital for the investigation of several problems in the field of mathematics, physics, biology among other areas, in particular these functions provide solutions of fractional differential equations, and even more generalizes the exponential function.  We recall the definitions of Mittag-Leffler function of one and two parameters.

\begin{definition}{\rm\cite{sao}}\label{def1}{\rm (One parameter Mittag-Leffler function)}. The Mittag-Leffler function is given by the series
\begin{equation}\label{A1}
\mathcal{E}_{\mu }\left( z\right) =\overset{\infty }{\underset{k=0}{\sum }}\frac{z^{k}}{\Gamma \left( \mu k+1\right) },
\end{equation}
where $\mu \in \mathbb{C}$ with $ {Re}\left( \mu \right) >0$.
\end{definition}

\begin{definition}{\rm\cite{sao}}\label{def2}{\rm (Two parameters Mittag-Leffler function)}. The two parameters Mittag-Leffler function is given by the series
\begin{equation}\label{A2}
\mathcal{E}_{\mu ,\nu }\left( z\right) =\overset{\infty }{\underset{k=0}{\sum }}\frac{z^{k}}{\Gamma \left( \mu k+\nu \right) },
\end{equation}
where $\mu ,\nu \in \mathbb{C}$, ${Re}\left( \mu \right) >0$ and ${Re}\left( \nu\right) >0$.
\end{definition}
In particular, for $\nu=1$ we have $\mathcal{E}_{\mu ,1}\left( z\right)=\mathcal{E}_{\mu}\left( z\right) $. On the other
hand, taking $\nu=\mu=1$, we have $\mathcal{E}_{1,1}\left( z\right)=\mathcal{E}_{1}\left( z\right)=e^{z}$.

For more details on Mittag-Leffler function and its other formulations with more parameters, we suggest \cite{mitt1}.

With this foundation, we now present the concept of $(\mu,\Psi,\xi )$--Bielecki type norm which is given in terms of the Mittag-Leffler function,
 an increasing and positive monotone function $\Psi: \Delta \to \mathbb{R_+}$  and constant $\xi >0$. It will be important tool for the investigation of the main results of this article.

Let $\Delta=[a,b]$ be a finite interval of the half-axis $\mathbb{R}^{+}$ and let $C(\Delta,\mathbb{R})$ be the space of continuous functions $f$ on $\Delta$ with the norm {\rm\cite{S1}}
\begin{equation}
\left\Vert x\right\Vert =\sup_{t\in\Delta}\left | x\left( t\right)\right | ,\quad x\in C\left(\Delta ,\mathbb{R}\right) .
\end{equation}

Let $\xi, ~\mu >0 ~(\xi, ~ \mu \in \mathbb{R})$ and   $\Psi: \Delta \to \mathbb{R_+}$ be  a positive  and an increasing monotone function. Consider the space $C_{\xi, \,\mu, \, \Psi }\left( \Delta,\mathbb{R}\right) $  of all continuous functions $x\in C\left( \Delta,\mathbb{R}\right) $, such that {\rm\cite{S1}}
\begin{equation*}
\sup_{t\in \Delta}\frac{\left| x(t)\right| }{\mathcal{E}_{\mu }\left( \xi \left( \Psi \left( t\right) -\Psi \left( a\right) \right) ^{\mu }\right) }
<\infty 
\end{equation*}
where $\mathcal{E}_{\mu }:\mathbb{R}\rightarrow \mathbb{R}$ is the one-parameter Mittag-Leffler function.

We couple the linear space $C_{\xi, \,\mu, \, \Psi }\left( \Delta,\mathbb{R}\right)$ 
with the norm defined by 
\begin{equation}
\left\Vert x\right\Vert _{\xi, \,\mu, \, \Psi }:=\sup_{t\in \Delta}\frac{\left| x\left(t\right) \right| }{\mathcal{E}_{\mu }(\xi (\Psi (t)-\Psi (a))^{\mu})}.  \label{P2}
\end{equation}
This norm induces a metric 
\begin{equation}
\mathbf{d}_{\xi, \,\mu, \, \Psi }(x,y)=\left\Vert x-y\right\Vert _{\xi, \,\mu, \, \Psi }=\sup_{t\in \Delta}\frac{\left | x(t)-y(t)\right | }{\mathcal{E}_{\mu }\left( \xi \left( \Psi \left( t\right) -\Psi \left(
a\right) \right) ^{\mu }\right) }.  \label{P1} 
\end{equation}

Note that the  norm and metric  as seen in {\rm Eq.(\ref{P2})} and {\rm Eq.(\ref{P1})} are in fact an extension of a class of norms and metrics, that is, taking $\Psi
(t)=t$ in {\rm Eq.(\ref{P2})} and {\rm Eq.(\ref{P1})}, 
the norm defined by 
\begin{equation}
\left\Vert x\right\Vert _{\xi, \,\mu,}=\sup_{t\in \Delta}\frac{\left | x\left(t\right) \right | }{\mathcal{E}_{\mu } \left(  \xi \left( t-a\right)
^{\mu }\right)  }.  \label{P4}
\end{equation}
and the metric
\begin{equation}
\mathbf{d}_{\xi, \,\mu}(x,y)=\sup_{t\in \Delta}\frac{|x(t)-y(t)|}{\mathcal{E}_{\mu }\left( \xi \left( t-a\right) ^{\mu }\right) }.   \label{P3}
\end{equation}

On the other hand, taking $\Psi (t)=t$ and applying the limit $\mu \rightarrow 1$ in {\rm Eq.(\ref{P2})} and {\rm Eq.(\ref{P1})}, 
we get the Bielecki norm {\rm\cite{prin2}} defined by 
\begin{equation}
\left\Vert x\right\Vert _{\xi, \,1}=\sup_{t\in \Delta}\frac{\left | x\left(t\right) \right | }{\mathrm{exp}\left( \xi \left( t-a\right) \right)  }
\label{P6}
\end{equation}
with a induced metric
\begin{equation}
\mathbf{d}_{\xi, \,1}(x,y)=\sup_{t\in \Delta}\frac{\left| x(t)-y(t)\right| }{\mathrm{exp}\left(  \xi \left( t-a\right) \right) }. \label{P5}
\end{equation}%

\begin{lemma}{\rm \cite{S1}}  If Let $\xi, ~\mu >0 ~(\xi, ~ \mu \in \mathbb{R})$ and   $\Psi: \Delta \to \mathbb{R_+}$ be  a positive  and an increasing monotone function, then: 

\begin{enumerate}
\item $\left\Vert  \cdot  \right\Vert _{\xi, \,\mu, \, \Psi }$ is a norm; 
\item $\left( C_{\xi, \,\mu, \, \Psi }\left( \Delta,\mathbb{R}\right) ,\left\Vert \left( \cdot\right) \right\Vert _{\xi, \,\mu, \, \Psi }\right) $ is a Banach space;

\item $\mathbf{d}_{\xi, \,\mu, \, \Psi }$ is a metric; 
	
\item $\left( C_{\xi, \,\mu, \, \Psi }\left( \Delta,\mathbb{R}\right),\, \mathbf{d}_{\xi, \,\mu, \, \Psi }\right) $ is a complete metric space. 

\end{enumerate}
\end{lemma}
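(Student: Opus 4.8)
The plan is to show that the weighted supremum norm $\left\Vert \cdot \right\Vert_{\xi,\mu,\Psi}$ is equivalent to the ordinary supremum norm $\left\Vert \cdot \right\Vert$ on $C(\Delta,\mathbb{R})$, and then to deduce all four assertions from the corresponding classical facts for $(C(\Delta,\mathbb{R}),\left\Vert \cdot \right\Vert)$. First I would set $w(t):=\mathcal{E}_{\mu}\left(\xi(\Psi(t)-\Psi(a))^{\mu}\right)$ and record its elementary properties: since $\Psi$ is positive, increasing and has a continuous derivative, the map $t\mapsto \xi(\Psi(t)-\Psi(a))^{\mu}$ is continuous and nonnegative on $\Delta$, and since the one-parameter Mittag-Leffler function of Definition~\ref{def1} is a power series with nonnegative coefficients and constant term $1$, the composition $w$ is continuous on $\Delta$ with $w(t)\geq 1>0$ for every $t\in\Delta$. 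Because $\Delta=[a,b]$ is compact, $w$ attains a minimum $m>0$ and a maximum $M<\infty$, whence
\begin{equation*}
\frac{1}{M}\left\Vert x\right\Vert \leq \left\Vert x\right\Vert_{\xi,\mu,\Psi}\leq \frac{1}{m}\left\Vert x\right\Vert,\qquad x\in C(\Delta,\mathbb{R}).
\end{equation*}
In particular the defining supremum is finite for every continuous $x$, so $C_{\xi,\mu,\Psi}(\Delta,\mathbb{R})=C(\Delta,\mathbb{R})$ as sets and the two norms are equivalent.

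For item (1), the norm axioms are immediate: $\left\Vert x\right\Vert_{\xi,\mu,\Psi}\geq 0$, and it vanishes precisely when $|x(t)|/w(t)=0$ for all $t$, i.e. (as $w>0$) when $x\equiv 0$; homogeneity follows by factoring $|\lambda|$ out of the supremum; and the triangle inequality follows from $|x(t)+y(t)|/w(t)\leq |x(t)|/w(t)+|y(t)|/w(t)\leq \left\Vert x\right\Vert_{\xi,\mu,\Psi}+\left\Vert y\right\Vert_{\xi,\mu,\Psi}$ followed by taking the supremum over $t\in\Delta$. For item (2), let $(x_{n})$ be a $\left\Vert \cdot \right\Vert_{\xi,\mu,\Psi}$-Cauchy sequence; by the left inequality above it is $\left\Vert \cdot \right\Vert$-Cauchy, hence converges uniformly to some $x\in C(\Delta,\mathbb{R})$ by completeness of $(C(\Delta,\mathbb{R}),\left\Vert \cdot \right\Vert)$; by the right inequality, $\left\Vert x_{n}-x\right\Vert_{\xi,\mu,\Psi}\leq m^{-1}\left\Vert x_{n}-x\right\Vert\to 0$, and $x\in C_{\xi,\mu,\Psi}(\Delta,\mathbb{R})$, so the space is Banach.

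Items (3) and (4) then require no separate work: $\mathbf{d}_{\xi,\mu,\Psi}(x,y)=\left\Vert x-y\right\Vert_{\xi,\mu,\Psi}$ is the metric induced by the norm of item (1), so non-negativity, symmetry, the identity of indiscernibles and the triangle inequality for $\mathbf{d}_{\xi,\mu,\Psi}$ are inherited directly from the norm axioms, and completeness of $(C_{\xi,\mu,\Psi}(\Delta,\mathbb{R}),\mathbf{d}_{\xi,\mu,\Psi})$ is exactly the Banach property established in item (2). The only point that carries any content is the bound $0<m\leq w(t)\leq M<\infty$ on the weight, which rests on the positivity and continuity of the Mittag-Leffler function together with the continuity of $\Psi$ and the compactness of $\Delta$; once that is in place, everything else is routine bookkeeping, and indeed this shows that the construction works for any continuous strictly positive weight, the specific Mittag-Leffler form mattering only for the later estimates involving the fractional integral.
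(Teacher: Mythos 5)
Your argument is correct. Note, however, that the paper itself offers no proof of this lemma: it is quoted from the reference [S1] as a known fact, so there is no internal argument to compare yours against. Your route --- bounding the weight $w(t)=\mathcal{E}_{\mu}\bigl(\xi(\Psi(t)-\Psi(a))^{\mu}\bigr)$ between two positive constants, concluding that $\left\Vert \cdot \right\Vert_{\xi,\mu,\Psi}$ is equivalent to the usual supremum norm on the compact interval $\Delta=[a,b]$, and importing the norm axioms and completeness from the classical space $\bigl(C(\Delta,\mathbb{R}),\left\Vert \cdot \right\Vert\bigr)$ --- is the standard way such Bielecki-type statements are established, and it is exactly what makes these norms useful (equivalence to the sup norm, but with a better contraction constant later on). Two small refinements: you do not actually need continuity of $\Psi$ or a compactness extremum argument to bound the weight, since monotonicity alone gives $1=\mathcal{E}_{\mu}(0)\leq w(t)\leq \mathcal{E}_{\mu}\bigl(\xi(\Psi(b)-\Psi(a))^{\mu}\bigr)$ for all $t\in\Delta$, because $t\mapsto \xi(\Psi(t)-\Psi(a))^{\mu}$ is nondecreasing and $\mathcal{E}_{\mu}$ has nonnegative Taylor coefficients; and the lemma as stated only assumes $\Psi$ positive and increasing, so phrasing the bound this way keeps your proof within the stated hypotheses (continuity of $\Psi$ is only a standing assumption elsewhere in the paper). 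With that adjustment your observation that $C_{\xi,\mu,\Psi}(\Delta,\mathbb{R})=C(\Delta,\mathbb{R})$ as sets, and items (1)--(4) as consequences of norm equivalence, are all sound.
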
	


\section{Main results}

To obtain our main results we need the following conditions on $f$ and  $\mathcal{U}$ :

\begin{itemize}
\item [(C$_{1}$)]
There exists $L_{f}>0$ such that 
\begin{equation}
\left\vert f\left( t,u_{1}\right) -f\left( t,u_{2}\right) \right\vert \leq L_{f}\left\vert u_{1}-u_{2}\right\vert ,~ t\in\Delta,\quad u_{1},u_{2}\in \mathbb{R};  \label{C1}
\end{equation}
\item [(C$_{2}$)] $\mathcal{U}:C_{\xi, \,\mu, \, \Psi }\left( \Delta ,\mathbb{R}\right) \rightarrow C_{\xi, \,\mu, \, \Psi }\left( \Delta ,\mathbb{R}\right) $ is an AVO and there exists $L_{\mathcal{U}}>0$ and $\xi >0$ with $\xi>L_{\mathcal{U}}+L_{f}$, such that 
\begin{equation}
\left| \mathcal{U}(x)(t)-\mathcal{U}(y)(t)\right|\leq L_{\mathcal{U}}\left | x(t)-y(t)\right|,~ x,y\in C_{\xi, \,\mu, \, \Psi }\left( \Delta ,\mathbb{R}\right),~t \in \Delta \label{C2}
\end{equation}%
where $\left\Vert  \cdot  \right\Vert _{\xi, \,\mu, \, \Psi }$ is the Bielecki type norm defined by {\rm Eq.(\ref{P2})}.
\end{itemize}

In this section, we will research the principle motivation behind this paper, i.e., the existence, uniqueness and date dependence of the solution of the fractional functional differential equation with the abstract Volterra operator, introduced by means of the $\Psi-$H fractional derivative.

First, let's prove that the fractional functional differential equation, {\rm Eq.(\ref{I1})}, satisfying {\rm Eq.(\ref{I1a})}, is equivalent to the following fractional VIE
\begin{equation}\label{sol1}
x(t)=\mathcal{R}^{\Psi}_{\varepsilon}\left( t,a\right)  x_{0}+\frac{1}{\Gamma(\mu )}\int_{a}^{t}\mathbf{\mathcal{G}_{\Psi }^{\mu }}(t,s)\mathcal{U}(x)(s)\,{d}s+\frac{1}{\Gamma(\mu )}\int_{a}^{t}\mathbf{\mathcal{G}_{\Psi }^{\mu }}(t,s)f(s,x(s))\,{d}s
\end{equation}
where $\mathcal{R}^{\Psi}_{\varepsilon}\left( t,a\right) :=\dfrac{(\Psi (t)-\Psi (a))^{\varepsilon-1}}{\Gamma (\varepsilon )}$ with $t\in\Delta.$

Let $x\in C_{\xi, \,\mu, \, \Psi }^{1}\left( \Delta,\mathbb{R}\right) $  is the solution of {\rm Eq.(\ref{I1})}--{\rm Eq.(\ref{I1a})}. Then using the {\rm Theorem \ref{Thm1}, we have
\begin{equation}
\mathbf{I}_{a^{+}}^{\mu ,\Psi }{}^{\mathbf{H}}\mathfrak{D}_{a^{+}}^{\mu ,\nu ,\Psi}x(t)=\mathbf{I}_{a^{+}}^{\mu ,\Psi }\left( \mathcal{U}(x)(t)+f(t,x(t))\right) 
\end{equation}
which implies 
\begin{equation}
x(t)=\mathcal{R}^{\Psi}_{\varepsilon}\left( t,a\right) x_{0}+\mathbf{I}_{a^{+}}^{\mu ,\Psi }\mathcal{U}(x)(t)+\mathbf{I}_{a^{+}}^{\mu ,\Psi }f(t,x(t)).  \label{MR1}
\end{equation}
This proves $x\in C_{\xi, \,\mu, \, \Psi }^{1}(\Delta,\mathbb{R})$ is also a solution of {\rm Eq.(\ref{I1})}.

On the other hand, by applying the fractional derivative ${}^{\mathbf{H}}\mathfrak{D}_{a^{+}}^{\mu ,\nu ,\Psi }\left( \cdot \right) $ on both sides of {\rm Eq.(\ref{MR1})}, we get 
\begin{eqnarray}
{}^{\mathbf{H}}\mathfrak{D}_{a^{+}}^{\mu ,\nu ,\Psi }x(t) &=&\text{}^{\mathbf{H}}\mathfrak{D}_{a^{+}}^{\mu ,\nu ,\Psi }\left[ \mathcal{R}^{\Psi}_{\varepsilon}\left( t,a\right) x_{0}+\mathbf{I}_{a^{+}}^{\mu ,\Psi }f(t,x(t))\right]   \notag \\
&=&\text{}^{\mathbf{H}}\mathfrak{D}_{a^{+}}^{\mu ,\nu ,\Psi }\mathcal{R}^{\Psi}_{\varepsilon}\left( t,a\right) x_{0}+\mathcal{U}(x)(t)+f(t,x(t))  \notag \\
&=&\mathcal{U}(x)(t)+f(t,x(t)).
\end{eqnarray}

Also, if $x\in C_{\xi, \,\mu, \, \Psi }^{1}(\Delta,\mathbb{R})$ is a solution of {\rm Eq.(\ref{I1})}, then $x$ is a solution of 
\begin{equation}
x(t)=\mathcal{R}^{\Psi}_{\varepsilon}\left( t,a\right) \mathbf{I}_{a^{+}}^{1-\varepsilon ,\Psi}x(a)+\mathbf{I}_{a^{+}}^{\mu ,\Psi }(x)(t)+\mathbf{I}_{a^{+}}^{\mu ,\Psi}f(t,x(t)).  \label{MR2}
\end{equation}
and if $x\in C_{\xi, \,\mu, \, \Psi }\left( \Delta,\mathbb{R}\right)$ is a solution of {\rm Eq.(\ref{MR2})} then it is solution of {\rm Eq.(\ref{I1})}.

Consider the following operators $\Theta _{f},\Phi _{f}:C_{\xi, \,\mu, \, \Psi }\left( \Delta,\mathbb{R}\right)\rightarrow C_{\xi, \,\mu, \, \Psi }\left( \Delta,\mathbb{R}\right)$ defined by 
\begin{eqnarray} \label{MR3}
\Theta _{f}(x)(t) &:&=\mathcal{R}^{\Psi}_{\varepsilon}\left( t,a\right) x_{0}+\frac{1}{\Gamma (\mu )}\int_{a}^{t}\mathcal{G}_{\Psi }^{\mu }(t,s)\mathcal{U}(x)(s)ds +\frac{1}{\Gamma (\mu )}\mathcal{G}_{\Psi }^{\mu }(t,s)f(s,x(s))ds. \notag \\
\end{eqnarray}
and 
\begin{eqnarray}\label{MR4}
\Phi _{f}(x)(t) &:&=\mathcal{R}^{\Psi}_{\varepsilon}\left( t,a\right) \mathbf{I}_{a^{+}}^{1-\varepsilon,\Psi }x(a)+\frac{1}{\Gamma (\mu )}\int_{a}^{t}\mathcal{G}_{\Psi }^{\mu }(t,s)\mathcal{U}(x)(s)\,{d}s+\frac{1}{\Gamma (\mu )}\int_{a}^{t}\mathcal{G}_{\Psi }^{\mu }(t,s)f(s,x(s))\,{d}s. \notag \\
\end{eqnarray}

\begin{theorem}\label{theorem2} Let $f:C_{\xi, \,\mu, \, \Psi }\left( \Delta,\mathbb{R}\right) \to \mathbb{R}$, $x_0 \in \mathbb{R}$ and suppose that the conditions $(C_1)$ and $(C_2)$  given in {\rm Eq.(\ref{C1})} and {\rm Eq.(\ref{C2})} respectively,  are satisfied. Then,
\begin{flushleft}
\begin{tabular}{cl}
{\rm (a)} & the {\rm Eq.(\ref{I1})}--{\rm Eq.(\ref{I1a})} has a unique solution in $C_{\xi, \,\mu, \, \Psi }\left( \Delta,\mathbb{R}\right)$;\\
{\rm (b)} & $\Theta_f$ is a Picard operator in $C_{\xi, \,\mu, \, \Psi }\left( \Delta,\mathbb{R}\right)$;\\
{\rm (c)} & $\Phi_f$ is weakly Picard operator in $C_{\xi, \,\mu, \, \Psi }\left( \Delta,\mathbb{R}\right)$.
\end{tabular}
\end{flushleft}
\end{theorem}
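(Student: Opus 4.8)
The plan is to prove part (b) first and then read off (a) and (c) from it. For (b) I would show that $\Theta_f$ is a contraction on the complete metric space $\bigl(C_{\xi,\mu,\Psi}(\Delta,\mathbb{R}),\mathbf{d}_{\xi,\mu,\Psi}\bigr)$ and invoke the Banach contraction principle, which simultaneously yields the unique fixed point and the convergence of every Picard iterate, that is, the statement that $\Theta_f$ is a Picard operator. Since a fixed point of $\Theta_f$ is precisely a solution of the fractional VIE Eq.(\ref{sol1}), which has already been shown equivalent to the problem Eq.(\ref{I1})--Eq.(\ref{I1a}), statement (a) then follows; applying $\mathbf{I}_{a^{+}}^{1-\varepsilon,\Psi}$ at $t=a$ to the identity $x=\Theta_f(x)$ (using $\mathbf{I}_{a^{+}}^{1-\varepsilon,\Psi}\bigl[\mathcal{R}^{\Psi}_{\varepsilon}(\cdot,a)\bigr]\equiv 1$ and $\mathbf{I}_{a^{+}}^{1-\varepsilon,\Psi}\mathbf{I}_{a^{+}}^{\mu,\Psi}=\mathbf{I}_{a^{+}}^{1-\varepsilon+\mu,\Psi}$ with $1-\varepsilon+\mu>0$) also recovers the initial condition Eq.(\ref{I1a}), so the fixed point is genuinely the unique solution of the full problem.

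For the contraction estimate, fix $x,y\in C_{\xi,\mu,\Psi}(\Delta,\mathbb{R})$ and $t\in\Delta$. The triangle inequality in Eq.(\ref{MR3}) together with hypotheses (C$_1$) and (C$_2$) gives
\[
|\Theta_f(x)(t)-\Theta_f(y)(t)|\;\le\;\frac{L_{\mathcal{U}}+L_f}{\Gamma(\mu)}\int_a^t \mathcal{G}_\Psi^\mu(t,s)\,|x(s)-y(s)|\,ds .
\]
Bounding $|x(s)-y(s)|\le \|x-y\|_{\xi,\mu,\Psi}\,\mathcal{E}_\mu\!\bigl(\xi(\Psi(s)-\Psi(a))^\mu\bigr)$ straight from the definition of the Bielecki-type norm, the whole estimate reduces to the single inequality
\[
\frac{1}{\Gamma(\mu)}\int_a^t \mathcal{G}_\Psi^\mu(t,s)\,\mathcal{E}_\mu\!\bigl(\xi(\Psi(s)-\Psi(a))^\mu\bigr)\,ds\;\le\;\frac{1}{\xi}\,\mathcal{E}_\mu\!\bigl(\xi(\Psi(t)-\Psi(a))^\mu\bigr),
\]
whose left-hand side is exactly $\mathbf{I}_{a^{+}}^{\mu,\Psi}$ applied to the Mittag-Leffler weight.

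This inequality is the technical heart of the argument and the step I expect to be the main obstacle. I would prove it by expanding $\mathcal{E}_\mu$ in its defining power series, integrating term by term (all terms are nonnegative, so this is justified by monotone convergence on the compact interval $\Delta$), and applying to each term the elementary identity $\mathbf{I}_{a^{+}}^{\mu,\Psi}\bigl[(\Psi(\cdot)-\Psi(a))^{\mu k}\bigr](t)=\dfrac{\Gamma(\mu k+1)}{\Gamma(\mu k+\mu+1)}(\Psi(t)-\Psi(a))^{\mu(k+1)}$; re-indexing the resulting series shows the left-hand side equals $\xi^{-1}\bigl(\mathcal{E}_\mu(\xi(\Psi(t)-\Psi(a))^\mu)-1\bigr)\le \xi^{-1}\mathcal{E}_\mu(\xi(\Psi(t)-\Psi(a))^\mu)$. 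Combining the two displays, dividing by $\mathcal{E}_\mu(\xi(\Psi(t)-\Psi(a))^\mu)$ and taking the supremum over $t\in\Delta$ yields $\|\Theta_f(x)-\Theta_f(y)\|_{\xi,\mu,\Psi}\le \frac{L_{\mathcal{U}}+L_f}{\xi}\|x-y\|_{\xi,\mu,\Psi}$, and since (C$_2$) forces $\xi>L_{\mathcal{U}}+L_f$ the constant is strictly less than $1$. This establishes (b), and with it (a).

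For (c) I would use the characterization of weakly Picard operators in Theorem \ref{theorem1}. Put $\lambda(x):=\mathbf{I}_{a^{+}}^{1-\varepsilon,\Psi}x(a)$ and $X_\eta:=\{x\in C_{\xi,\mu,\Psi}(\Delta,\mathbb{R}):\lambda(x)=\eta\}$, so that $\{X_\eta\}_{\eta\in\mathbb{R}}$ is a partition of the space. Applying $\mathbf{I}_{a^{+}}^{1-\varepsilon,\Psi}$ to $\Phi_f(x)$ and evaluating at $t=a$, exactly as in the first paragraph, gives $\lambda(\Phi_f(x))=\lambda(x)$, so each $X_\eta\in I(\Phi_f)$. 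On $X_\eta$ the operator $\Phi_f$ coincides with the operator $\Theta_f^{\eta}$ obtained from $\Theta_f$ upon replacing $x_0$ by $\eta$; by the estimate above $\Theta_f^{\eta}$ is a contraction on all of $C_{\xi,\mu,\Psi}(\Delta,\mathbb{R})$, hence a Picard operator with a unique fixed point $x_\eta^{*}$, and the same computation shows $x_\eta^{*}\in X_\eta$. Since the iterates of $\Phi_f$ started in $X_\eta$ coincide with those of $\Theta_f^{\eta}$ and therefore converge to $x_\eta^{*}\in X_\eta$, the restriction $\Phi_f|_{X_\eta}$ is a Picard operator, and Theorem \ref{theorem1} gives that $\Phi_f$ is a weakly Picard operator, proving (c). Once the Mittag-Leffler fractional-integral inequality is secured, everything else is routine bookkeeping with the $\Psi$-fractional calculus.
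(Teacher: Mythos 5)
Your proposal is correct and follows essentially the same route as the paper: the same Bielecki-type contraction estimate, with the key term-by-term computation $\mathbf{I}_{a^{+}}^{\mu,\Psi}\mathcal{E}_{\mu}\bigl(\xi(\Psi(t)-\Psi(a))^{\mu}\bigr)=\tfrac{1}{\xi}\bigl\{\mathcal{E}_{\mu}(\xi(\Psi(t)-\Psi(a))^{\mu})-1\bigr\}$, and the same partition of $C_{\xi,\mu,\Psi}(\Delta,\mathbb{R})$ into fibers of the map $x\mapsto\mathbf{I}_{a^{+}}^{1-\varepsilon,\Psi}x(a)$ combined with Theorem \ref{theorem1} to get that $\Phi_f$ is weakly Picard. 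Your write-up is in fact slightly more careful than the paper's (explicitly verifying the invariance of the fibers and the recovery of the initial condition), but the underlying argument is the same.
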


\begin{proof} Let $\Omega=C_{\xi, \,\mu, \, \Psi }\left( \Delta,\mathbb{R}\right)$ and consider the set defined by 
\begin{equation*}
\Omega _{x_{0}}=\left\{ x\in \Omega: \mathbf{I}_{a^{+}}^{1-\varepsilon;\Psi }x(a)=x_{0}\right\}, x_{0}\in \mathbb{R}.
\end{equation*}

We remark that $\Omega =\hspace{-0.4cm}
\begin{array}{c}
\bigcup  \\ 
\vspace{-0.4cm}{x_{0}\in \mathbb{R}}%
\end{array}%
\hspace{-0.2cm}\Omega _{x_{0}}$, is a partition of $\Omega$ and

\begin{flushleft}
\begin{tabular}{cl}
{\rm 1.} & $\Theta _{f}(\Omega )\subset \Omega _{x_{0}}$ and $E_{f}(\Omega _{x_{0}})\subset \Omega _{x_{0}}$, for all $x_{0}\in \mathbb{R}$;\\ 
{\rm 2.} & $\Theta _{f}|_{\Omega _{x_{0}}}=\Phi _{f}|_{\Omega _{x_{0}}}$, for all $x_{0}\in \mathbb{R}$.%
\end{tabular}
\end{flushleft}

Using equation {\rm Eq.(\ref{MR3})}, for any $x,y\in \Omega $ and $t \in \Delta$, we have
\begin{eqnarray} \label{cn1}
&&\left| \Theta _{f}\left( x\right)(t) -\Theta _{f}\left( y\right)(t)\right| \notag\\
 &=&\left| \frac{1}{\Gamma (\mu )}\int_{a}^{t}\mathcal{G}_{\Psi }^{\mu }(t,s)\mathcal{U}(x)(s)\,\mathrm{d}s-\frac{1}{\Gamma (\mu )}\int_{a}^{t}\mathcal{G}_{\Psi }^{\mu }(t,s)\mathcal{U}(y)(s)\,\mathrm{d}s\right|   \notag\\
&&+\left| \frac{1}{\Gamma (\mu )}\int_{a}^{t}\mathcal{G}_{\Psi }^{\mu }(t,s)f(s,x(s))\,\mathrm{d}s-\frac{1}{\Gamma (\mu )}\int_{a}^{t}\mathcal{G}_{\Psi }^{\mu }(t,s)f(s,y(s))\,\mathrm{d}s\right|   \notag \\
&\leq &\frac{1}{\Gamma (\mu )}\int_{a}^{t}\mathcal{G}_{\Psi }^{\mu }(t,s)\left| \mathcal{U}(x)(s)-\mathcal{U}(y)(s)\right|\,\mathrm{d}s+\frac{1}{\Gamma (\mu )}\int_{a}^{t}\mathcal{G}_{\Psi }^{\mu }(t,s)\left | f(s,x(s))-f(s,y(s))\right| \mathrm{d}s  \notag \\
&\leq &\frac{L_{\mathcal{U}}}{\Gamma (\mu )}\int_{a}^{t}\mathcal{G}_{\Psi }^{\mu }(t,s)\left| x(s)-y(s)\right|\,\mathrm{d}s+\frac{L_{f}}{\Gamma (\mu )}\int_{a}^{t}\mathcal{G}_{\Psi }^{\mu }(t,s)\left| x(s)-y(s)\right|\,\mathrm{d}s  \notag \\
&\leq &\frac{(L_{\mathcal{U}}+L_{f})}{\Gamma(\mu )}\int_{a}^{t}\mathcal{G}^{\mu}_{\Psi}\left( t,s\right)\, {\mathcal{E}_{\mu }\left( \xi \left( \Psi \left( s\right) -\Psi \left( a\right) \right) ^{\mu }\right) }\,\frac{\left| x(s)-y(s)\right|}{{\mathcal{E}_{\mu }\left( \xi \left( \Psi \left( s\right) -\Psi \left( a\right) \right) ^{\mu }\right) }}\,\mathrm{d}s  \notag \\
&\leq&\frac{(L_{f}+L_{\mathcal{U}})\left\Vert x-y\right\Vert_{\xi, \,\mu, \, \Psi }}{\Gamma (\mu)}\,\int_{a}^{t}\mathcal{G}^{\mu}_{\Psi}\left( t,s\right)\, {\mathcal{E}_{\mu }\left( \xi \left( \Psi \left( s\right) -\Psi \left( a\right) \right) ^{\mu }\right) }\,\mathrm{d}s \notag\\
&=& (L_{f}+L_{\mathcal{U}})\left\Vert x-y\right\Vert_{\xi, \,\mu, \, \Psi }\, \mathbf{I}_{a^{+}}^{\mu ,\Psi }{\mathcal{E}_{\mu }\left( \xi \left( \Psi \left( t\right) -\Psi \left( a\right) \right) ^{\mu }\right)}.
\end{eqnarray}
Note that 
\begin{align*}
\mathbf{I}^{\mu;\,\Psi}_{a+}\mathcal{E}_{\mu}(\xi(\Psi(t)-\Psi(a))^\mu)
&=\mathbf{I}^{\mu;\,\Psi}_{a+}\left[\sum_{k=o}^{\infty}\frac{(\xi(\Psi(t)-\Psi(a)^{\mu}))^k}{\Gamma(k\mu+1)}\right]\\
&=\sum_{k=o}^{\infty}\frac{\xi^k}{\Gamma(k\mu+1)}\,\,\mathbf{I}^{\mu;\,\Psi}_{a+}(\Psi(t)-\Psi(a))^{k\mu}\\
&=\sum_{k=o}^{\infty}\frac{\xi^k}{\Gamma(k\mu+1)}\, \frac{\Gamma(k\mu+1)}{\Gamma(k\mu+\mu+1)}(\Psi(t)-\Psi(a))^{k\mu+\mu}\\
&=\sum_{k=o}^{\infty}\frac{\xi^k}{\Gamma(\mu(k+1)+1)}(\Psi(t)-\Psi(a))^{\mu(k+1)}\\
&=\frac{1}{\xi}\,\sum_{k=o}^{\infty}\frac{\xi^{k+1}}{\Gamma(\mu(k+1)+1)}(\Psi(t)-\Psi(a))^{\mu(k+1)}\\
&=\frac{1}{\xi}\,\left\lbrace \sum_{k=o}^{\infty}\frac{\xi^{k}}{\Gamma(k\mu+1)}(\Psi(t)-\Psi(a))^{k\mu}-1\right\rbrace\\
&=\frac{1}{\xi}\,\left\lbrace \mathcal{E}_{\mu}(\xi(\Psi(t)-\Psi(a))^\mu)-1 \right\rbrace.
\end{align*}

Thus from equation {\rm Eq.(\ref{cn1})}, for any $x,y\in \Omega $ and $t \in \Delta$, we have
\begin{eqnarray*}
\left| \Theta _{f}\left( x\right)(t) -\Theta _{f}\left( y\right)(t)\right| 
\leq \frac{(L_{f}+L_{\mathcal{U}})\left\Vert x-y\right\Vert_{\xi, \,\mu, \, \Psi }}{\xi} \,\left\lbrace \mathcal{E}_{\mu}(\xi(\Psi(t)-\Psi(a))^\mu)-1 \right\rbrace. \notag
\end{eqnarray*}

Therefore, for any $x,y\in \Omega $, 
\begin{eqnarray*}
\left\Vert \Theta _{f}\left( x\right) -\Theta _{f}\left( y\right)\right\Vert _{\xi, \,\mu, \, \Psi } &=&\sup_{t\in \Delta}\frac{\left| \Theta _{f}\left( x\right)(t) -\Theta _{f}\left( y\right)(t)\right| }{\mathcal{E}_{\mu }(\xi (\Psi (t)-\Psi (a))^{\mu})}\\
&\leq& \frac{(L_{f}+L_{\mathcal{U}})}{\xi} \,\left\lbrace 1-\frac{1}{\mathcal{E}_{\mu }(\xi (\Psi (b)-\Psi (a))^{\mu})} \right\rbrace \, \left\Vert x-y\right\Vert_{\xi, \,\mu, \, \Psi }.\notag
\end{eqnarray*}

Since $\Psi$ is increasing function, $\xi>0$, 
$
0\leq \left\lbrace 1-\frac{1}{\mathcal{E}_{\mu }(\xi (\Psi (b)-\Psi (a))^{\mu})} \right\rbrace\leq 1
$
and hence,  we have  
\begin{eqnarray*}
\left\Vert \Theta _{f}\left( x\right) -\Theta _{f}\left( y\right)\right\Vert _{\xi, \,\mu, \, \Psi } &\leq& \frac{(L_{f}+L_{\mathcal{U}})}{\xi} \, \left\Vert x-y\right\Vert_{\xi, \,\mu, \, \Psi }, ~x,y\in \Omega. \notag
\end{eqnarray*}

Choosing $\xi >0$ such that $L_{\mathcal{U}}+L_{f}<\xi $, we have that $\Theta _{f}$ is a contraction in $(\Omega,\left\Vert \cdot  \right\Vert _{\xi, \,\mu, \, \Psi })$. Hence part (a) and (b) of the Theorem \ref{theorem2} is proved. Moreover, the operator $\Phi _{f}\left\vert _{\Omega _{x_{0}}}\right. :\Omega _{x_{0}}\rightarrow \Omega _{x_{0}}$ is a contraction and by means of theorem of weakly Picard operator {\rm (Theorem\ref{theorem1})}, we get that $\Phi _{f}$ is $c$-weakly Picard operator with  
\begin{equation}
c=\left( 1-\frac{(L_{f}+L_{\mathcal{U}})}{\xi}\right) ^{-1}  \label{jose}
\end{equation}%
which complete the proof.
\end{proof}

The natural question that arises is the following: ``Will it be possible to relate a solution of {\rm Eq.(\ref{I1})} and {\rm Eq.(\ref{I1a})} and a solution of same problem?" The appropriate response is yes and will be given by the theorem exhibited and demonstrated below.

\begin{theorem} [Theorem of $\check{C}$aplygin type] \label{t3}
Consider the function $f : C_{\xi, \,\mu, \, \Psi }\left( \Delta,\mathbb{R}\right) \to \mathbb{R}$, $x_0 \in \mathbb{R}$ and suppose that
\begin{flushleft}
\begin{tabular}{cl}
{\rm (a)} & the conditions $(C_1)$ and $(C_2)$ are satisfied; \\ 
{\rm (b)} & $f(t,\cdot) : \mathbb{R} \to \mathbb{R}$ is increasing; \\ 
{\rm (c)} & $\mathcal{U} : C_{\xi, \,\mu, \, \Psi }\left( \Delta,\mathbb{R}\right) \to C_{\xi, \,\mu, \, \Psi }\left( \Delta,\mathbb{R}\right)$ is increasing.
\end{tabular}
\end{flushleft}
Besides that, let $x$ be a solution of {\rm Eq.(\ref{I1})} and $y$ a satisfying the inequality 
\begin{equation}
{}^{\mathbf{H}}\mathfrak{D}_{a^{+}}^{\mu ,\nu ,\Psi }y(t)\leq \mathcal{U}(y)(t)+f(t,y(t)),\quad t\in\Delta.  \label{MR5}
\end{equation}

Then $\mathbf{I}_{a^{+}}^{1-\varepsilon ,\Psi }y(a)\leq \mathbf{I}_{a^{+}}^{1-\varepsilon ,\Psi }x(a)$ implies $y\leq x$.
\end{theorem}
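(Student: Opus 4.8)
The plan is to convert the differential inequality satisfied by $y$ into an operatorial inequality $y\leq\Theta_f(y)$, where $\Theta_f$ is the Picard operator (built with the initial datum of $x$) whose unique fixed point is precisely $x$, and then to invoke the abstract Gronwall lemma, Lemma \ref{k2}(a).

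First I would put $x$ in integral form. Set $x_0:=\mathbf{I}_{a^{+}}^{1-\varepsilon,\Psi}x(a)$. Since $x$ solves {\rm Eq.(\ref{I1})} and $1-\varepsilon=(1-\nu)(1-\mu)$, applying $\mathbf{I}_{a^{+}}^{\mu,\Psi}$ to {\rm Eq.(\ref{I1})} and using Theorem \ref{Thm1} gives
\[
x(t)=\mathcal{R}^{\Psi}_{\varepsilon}(t,a)\,x_{0}+\frac{1}{\Gamma(\mu)}\int_{a}^{t}\mathcal{G}_{\Psi}^{\mu}(t,s)\,\mathcal{U}(x)(s)\,ds+\frac{1}{\Gamma(\mu)}\int_{a}^{t}\mathcal{G}_{\Psi}^{\mu}(t,s)\,f(s,x(s))\,ds=\Theta_{f}(x)(t).
\]
By Theorem \ref{theorem2}(b) this $\Theta_f$ is a Picard operator on $\Omega=C_{\xi,\mu,\Psi}(\Delta,\mathbb{R})$, hence $x$ is its unique fixed point $x^{*}_{\Theta_f}$.

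Next I would apply $\mathbf{I}_{a^{+}}^{\mu,\Psi}$ to both sides of {\rm Eq.(\ref{MR5})}. The kernel $\mathcal{G}_{\Psi}^{\mu}(t,s)=\Psi'(s)(\Psi(t)-\Psi(s))^{\mu-1}$ is nonnegative for $a\leq s\leq t$ (as $\Psi$ is increasing with $\Psi'\geq 0$ and $0<\mu\leq 1$), so $\mathbf{I}_{a^{+}}^{\mu,\Psi}$ is order preserving. Combining with Theorem \ref{Thm1} yields
\[
y(t)\leq\mathcal{R}^{\Psi}_{\varepsilon}(t,a)\,\mathbf{I}_{a^{+}}^{1-\varepsilon,\Psi}y(a)+\frac{1}{\Gamma(\mu)}\int_{a}^{t}\mathcal{G}_{\Psi}^{\mu}(t,s)\,\mathcal{U}(y)(s)\,ds+\frac{1}{\Gamma(\mu)}\int_{a}^{t}\mathcal{G}_{\Psi}^{\mu}(t,s)\,f(s,y(s))\,ds,
\]
i.e. $y\leq\Phi_{f}(y)$. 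Since $\mathcal{R}^{\Psi}_{\varepsilon}(t,a)=(\Psi(t)-\Psi(a))^{\varepsilon-1}/\Gamma(\varepsilon)\geq 0$ on $\Delta$ (because $\varepsilon=\mu+\nu(1-\mu)\in(0,1]$) and, by hypothesis, $\mathbf{I}_{a^{+}}^{1-\varepsilon,\Psi}y(a)\leq\mathbf{I}_{a^{+}}^{1-\varepsilon,\Psi}x(a)=x_0$, I may replace $\mathbf{I}_{a^{+}}^{1-\varepsilon,\Psi}y(a)$ by $x_0$ in the bound above to obtain $y\leq\Theta_{f}(y)$.

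Finally I would verify that $\Theta_f$ is increasing on $(\Omega,\mathbf{d}_{\xi,\mu,\Psi},\leq)$ for the pointwise order: if $z_1\leq z_2$ then $\mathcal{U}(z_1)\leq\mathcal{U}(z_2)$ and $f(\cdot,z_1(\cdot))\leq f(\cdot,z_2(\cdot))$ by hypotheses (c) and (b), and applying the order-preserving operator $\mathbf{I}_{a^{+}}^{\mu,\Psi}$ gives $\Theta_f(z_1)\leq\Theta_f(z_2)$, the term $\mathcal{R}^{\Psi}_{\varepsilon}(t,a)x_0$ being constant in the argument. As a Picard operator, $\Theta_f$ is in particular weakly Picard, so Lemma \ref{k2}(a) applies to the pair (increasing, weakly Picard): $y\leq\Theta_f(y)$ forces $y\leq x^{*}_{\Theta_f}=x$, which is the assertion.

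The only genuinely delicate point is the bookkeeping when passing from $\Phi_f$ to $\Theta_f$: arguing directly with $\Phi_f$ would place $\Phi_f^{\infty}(y)$ in the invariant fiber $\Omega_{y_0}$ with $y_0=\mathbf{I}_{a^{+}}^{1-\varepsilon,\Psi}y(a)$ rather than in $\Omega_{x_0}$ where $x$ lives, so one must first exploit the nonnegativity of $\mathcal{R}^{\Psi}_{\varepsilon}(\cdot,a)$ together with the initial-condition inequality to replace the $y$-datum by the $x$-datum $x_0$. The remaining ingredients—positivity and monotonicity of the $\Psi$-Riemann–Liouville integral—are routine.
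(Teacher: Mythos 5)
Your proof is correct, and it reaches the conclusion by a genuinely different decomposition than the paper's. The paper stays with the weakly Picard operator $\Phi_f$: from $y\leq\Phi_f(y)$ and the abstract Gronwall lemma it gets $y\leq\Phi_f^{\infty}(y)$, and it then needs Lemma \ref{k1} (monotonicity of $\Phi_f^{\infty}$), the fiber partition $\Omega=\bigcup_{x_0}\Omega_{x_0}$ from Theorem \ref{theorem2}, and an identification of $\Phi_f^{\infty}(y)$ with $\Phi_f^{\infty}$ evaluated at the constant function built from the datum $\mathbf{I}_{a^{+}}^{1-\varepsilon,\Psi}y(a)$, before comparing with $x$ through the hypothesis on the initial data. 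You instead absorb that hypothesis at the very start: the nonnegativity of $\mathcal{R}^{\Psi}_{\varepsilon}(\cdot,a)$ lets you upgrade $y\leq\Phi_f(y)$ to $y\leq\Theta_f(y)$, where $\Theta_f$ is the Picard operator (Theorem \ref{theorem2}(b)) whose unique fixed point is $x$, and a single application of the abstract Gronwall lemma (Lemma \ref{k2}(a)) finishes the argument. This buys real simplification — no $\Phi_f^{\infty}$, no Lemma \ref{k1}, no fiber bookkeeping — and it also sidesteps a step of the paper that is actually delicate in this fractional setting: with the weighted initial condition $\mathbf{I}_{a^{+}}^{1-\varepsilon,\Psi}x(a)=x_0$ the constant function $\widetilde{x}_0(t)=x_0$ does \emph{not} lie in the fiber $\Omega_{x_0}$ when $\varepsilon<1$ (its weighted value at $a$ is $0$), so the paper's identification of $\Phi_f^{\infty}(y)$ with $\Phi_f^{\infty}(\widetilde{y}_0)$ is questionable, whereas your monotone replacement of the datum is robust. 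The one caution, common to both your argument and the paper's, is the tacit regularity assumption on $y$ (e.g. $y\in C^{1}_{1-\varepsilon,\Psi}(\Delta,\mathbb{R})$) needed for Theorem \ref{Thm1} to turn Eq.(\ref{MR5}) into the integral inequality; you should state it explicitly, but this is not a gap relative to the paper.
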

\begin{proof}
Consider the  relations $x=\Theta_f(x)$ and $y\leq \Phi_f(y)$ and $f : C_{\xi, \,\mu, \, \Psi }(\Delta,\,\mathbb{R}) \to \mathbb{R}$. By  means of the conditions  $x_0 \in \mathbb{R}$, $f : C_{\xi, \,\mu, \, \Psi }\left( \Delta,\mathbb{R}\right) \to \mathbb{R}$ and the conditions $(C_1)$ and $(C_2)$ it follows: the operator $\Theta_f$ is weakly Picard operator also, from conditions $(b)$ and $(c)$ we have $\Phi_f$ is an increasing operator. Since $x_{0}\in \mathbb{R}, ~f:C_{\mu, \Psi, \xi }(\Delta\times \mathbb{R})\rightarrow \mathbb{R}$ By means of the {\rm Lemma \ref{k1} } , we have that $\Phi_f^{\infty}(\cdot)$ is increasing. Now, consider $x_0 \in \mathbb{R}$ and denote by $ \widetilde{x}_0$ the  function defined by 
\begin{equation}  \label{MR6}
\widetilde{x}_0 : \Delta \to \mathbb{R}, \quad \widetilde{x}_0(t) = x_0, \quad t \in \Delta.
\end{equation}
Using the {\rm Theorem \ref{theorem2}}, we obtain $\Phi_f(X_{x_0}) \subset X_{x_0}$, $x_0 \in \mathbb{R}$, $\left. \Phi_f\right|_{X_{x_0}}$ is a contraction
and since $\widetilde{x}_0 \in {X_{x_0}}$ then $\Phi_f^{\infty}(\widetilde{x}_0) = \Phi_f^{\infty}(y)$, $y \in X_{x_0}$, where $\Phi_f^{\infty} (\widetilde{x}_0) = \displaystyle \lim_{n \to \infty} \Phi_f^n (\widetilde{x}_0)$.

For finalize the proof let $y\leq \Phi _{f}(y)$. Since $\Phi _{f}$ is increasing by using the {\rm abstract Gronwall Lemma \ref{k4}} we get $y\leq \Phi _{f}^{\infty }(y)$. Also, $y,~\mathbf{I}_{a^{+}}^{1-\varepsilon ,\Psi }\widetilde{y}(a)\in \Omega {\mathbf{I}_{a^{+}}^{1-\varepsilon ,\Psi }\widetilde{y}(a)}$, so $\Phi _{f}^{\infty }(y)=\Phi _{f}^{\infty }\left( \mathbf{I}_{a^{+}}^{1-\varepsilon,\Psi }\widetilde{(}y)(a)\right) $. But, $\mathbf{I}_{a^{+}}^{1-\varepsilon ,\Psi }y(a)\leq \mathbf{I}_{a^{+}}^{1-\varepsilon ,\Psi }x(a)$, $\Phi _{f}^{\infty }$ is increasing and $\Phi _{f}^{\infty }\left( \mathbf{I}_{a^{+}}^{1-\varepsilon ,\Psi }x(a)\right) =\Phi _{f}^{\infty }(x)=x$. Therefore, 
\begin{equation*}
y\leq \Phi _{f}^{\infty }(y)=\Phi _{f}^{\infty }\left( \mathbf{I}_{a^{+}}^{1-\varepsilon,\Psi }\widetilde{y}(a)\right) \leq \Phi _{f}^{\infty }\left(\mathbf{I}_{a^{+}}^{1-\varepsilon ,\Psi }\widetilde{x}(a)\right) =x
\end{equation*}
which close the proof. 
\end{proof}

\begin{theorem} [Comparison theorem] \label{t4} Let $f_i \in C_{\xi, \,\mu, \, \Psi }\left( \Delta,\mathbb{R}\right) $, $i=1,2,3$ satisfy the conditions ${\rm (C_1)}$ and ${\rm (C_2)}$. In addition, suppose that
\begin{flushleft}
\begin{tabular}{ll}
{\rm (a)}  $f_1 \leq f_2 \leq f_3$ and $\mathcal{U}_1 \leq \mathcal{U}_2 \leq \mathcal{U}_3$;\\
{\rm (b)}  $f_2(t,\cdot) : \mathbb{R} \to \mathbb{R}$ is increasing;\\
{\rm (c)}  $\mathcal{U}_2 : C_{\xi, \,\mu, \, \Psi }\left( \Delta,\mathbb{R}\right) \to C_{\xi, \,\mu, \, \Psi }\left( \Delta,\mathbb{R}\right)$ is increasing;\\
{\rm (d)}  $x_{i}\in C_{\xi, \,\mu, \, \Psi }^{1}(\Delta,\mathbb{R})$ be a solution of the equation \\
\end{tabular}
\end{flushleft}
\begin{equation*}
{}^{\mathbf{H}}\mathfrak{D}_{a^{+}}^{\mu ,\nu;\Psi} x_{i}(t)=\mathcal{U}_{i}(x)(t)+f_{i}(t,x(t)),\,\,\,t\in I:=\Delta,~\rm{and}~i=1,2,3.
\end{equation*}
\rm{If} ~$\mathbf{I}_{a^{+}}^{1-\varepsilon ,\Psi }x_{1}(a)\leq \mathbf{I}_{a^{+}}^{1-\varepsilon ,\Psi }x_{2}(a)\leq \mathbf{I}_{a^{+}}^{1-\varepsilon ,\Psi }x_{3}(a)$, then $x_{1}\leq x_{2}\leq x_{3}$.
\end{theorem}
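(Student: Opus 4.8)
The plan is to recast the three equations as fixed-point problems for operators $\Phi_{f_1},\Phi_{f_2},\Phi_{f_3}$, each of the form in {\rm Eq.(\ref{MR4})} with $\mathcal{U},f$ replaced by $\mathcal{U}_i,f_i$, and then to invoke the abstract comparison Lemma \ref{k4}. First I would fix a single $\xi>0$ with $\xi>L_{\mathcal{U}_i}+L_{f_i}$ for all $i=1,2,3$, so that Theorem \ref{theorem2} applies verbatim to each $f_i$: every $\Phi_{f_i}$ is then a weakly Picard operator on $\Omega=C_{\xi,\,\mu,\,\Psi}(\Delta,\mathbb{R})$, and on each invariant slice $\Omega_{\alpha}=\{x\in\Omega:\mathbf{I}_{a^{+}}^{1-\varepsilon,\Psi}x(a)=\alpha\}$ it restricts to a contraction, hence to a Picard operator. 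Writing $\alpha_i:=\mathbf{I}_{a^{+}}^{1-\varepsilon,\Psi}x_i(a)$, hypothesis (d) (together with the equivalence of {\rm Eq.(\ref{I1})} and {\rm Eq.(\ref{sol1})}) shows that $x_i$ is a fixed point of $\Phi_{f_i}$ lying in $\Omega_{\alpha_i}$, and by the uniqueness part of Theorem \ref{theorem2} applied to $f_i,\mathcal{U}_i$ it is the only one there; consequently $\Phi_{f_i}^{\infty}(u)=x_i$ for every $u\in\Omega_{\alpha_i}$.

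Next I would verify the three hypotheses of Lemma \ref{k4} for $\mathbf{T}_i=\Phi_{f_i}$. Hypothesis (i), that all three are weakly Picard, was just recorded. For (ii), observe that in {\rm Eq.(\ref{MR4})} the leading term $\mathcal{R}^{\Psi}_{\varepsilon}(t,a)\,\mathbf{I}_{a^{+}}^{1-\varepsilon,\Psi}x(a)$ does not involve the index $i$, while the two integral terms carry the nonnegative kernel $\mathcal{G}_{\Psi}^{\mu}(t,s)=\Psi'(s)(\Psi(t)-\Psi(s))^{\mu-1}\geq 0$; hence the orderings $f_1\leq f_2\leq f_3$ and $\mathcal{U}_1\leq\mathcal{U}_2\leq\mathcal{U}_3$ from (a) pass pointwise to $\Phi_{f_1}\leq\Phi_{f_2}\leq\Phi_{f_3}$. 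For (iii), monotonicity of $\Phi_{f_2}$, I would repeat the argument from the proof of Theorem \ref{t3}: if $u\leq w$ then $\mathcal{U}_2(u)\leq\mathcal{U}_2(w)$ by (c) and $f_2(s,u(s))\leq f_2(s,w(s))$ by (b), and the nonnegativity of the kernel again gives $\Phi_{f_2}(u)\leq\Phi_{f_2}(w)$.

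Finally I would exhibit an ordered triple of starting points lying in the correct slices: for each $i$ set $v_i:=\mathcal{R}^{\Psi}_{\varepsilon}(\cdot,a)\,\alpha_i$, so that, using $\mathbf{I}_{a^{+}}^{1-\varepsilon,\Psi}(\Psi(\cdot)-\Psi(a))^{\varepsilon-1}\equiv\Gamma(\varepsilon)$, one gets $v_i\in\Omega_{\alpha_i}$, while $\mathcal{R}^{\Psi}_{\varepsilon}(t,a)>0$ on $\Delta$ and $\alpha_1\leq\alpha_2\leq\alpha_3$ give $v_1\leq v_2\leq v_3$. Applying Lemma \ref{k4} to this triple yields
\[
x_1=\Phi_{f_1}^{\infty}(v_1)\leq\Phi_{f_2}^{\infty}(v_2)=x_2\leq\Phi_{f_3}^{\infty}(v_3)=x_3,
\]
which is the assertion. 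The step I expect to demand the most care is the identification $\Phi_{f_i}^{\infty}|_{\Omega_{\alpha_i}}\equiv x_i$ together with checking that the auxiliary functions $v_i$ genuinely lie in $\Omega_{\alpha_i}$, which rests on reading $\mathbf{I}_{a^{+}}^{1-\varepsilon,\Psi}x(a)$ as the appropriate weighted limit; by contrast, the operator inequality and the monotonicity of $\Phi_{f_2}$ are routine consequences of the positivity of $\mathcal{G}_{\Psi}^{\mu}$ and of (a)--(c). An alternative route that bypasses $\mathbf{T}_3$ altogether is to note that $x_1$ is a sub-solution and $x_3$ a super-solution of the second equation, and then apply the $\check{C}$aplygin-type Theorem \ref{t3} (and part (b) of the abstract Gronwall Lemma \ref{k2}) twice.
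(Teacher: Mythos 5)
Your proposal is correct and follows essentially the same route as the paper: show each $\Phi_{f_i}$ is weakly Picard via Theorem \ref{theorem2}, deduce $\Phi_{f_1}\leq\Phi_{f_2}\leq\Phi_{f_3}$ from (a) and monotonicity of $\Phi_{f_2}$ from (b)--(c), then apply the abstract comparison Lemma \ref{k4} to ordered starting points in the slices $\Omega_{\alpha_i}$ and identify $x_i=\Phi_{f_i}^{\infty}$ of those points. The only difference is cosmetic: the paper takes the constant extensions $\widetilde{x}_i(a)$ as starting points, whereas you take $v_i=\mathcal{R}^{\Psi}_{\varepsilon}(\cdot,a)\,\alpha_i$ and verify the slice membership explicitly, which if anything is a cleaner implementation of the same argument.
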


\begin{proof} Note that $\Phi_{f_i}$ with $i=1,2,3$, are weakly Picard operators by means of {Theorem \ref{theorem2}}. In addition, using the condition {\rm (b)} the operator $\Phi_{f_2}$ is monotone increasing. In this sense, using the condition {\rm (a)}, we get $\Phi_{f_1} \leq \Phi_{f_2} \leq \Phi_{f_3}$.

Now, we define $\widetilde{x}_i(a)(t) = x_i (a)$ for $t \in \Delta$, with $\widetilde{x}_i(a) \in   C_{\xi, \,\mu, \, \Psi }\left( \Delta,\mathbb{R}\right) $. In this sense, we get
$$
I_{a^{}}^{1-\varepsilon,\Psi} \widetilde{x}_1(a)(t) \leq I_{a^{}}^{1-\varepsilon,\Psi} \widetilde{x}_2(a)(t) \leq I_{a^{}}^{1-\varepsilon,\Psi} \widetilde{x}_3(a)(t), ~ t \in \Delta. 
$$

By means of the {\rm Lemma \ref{k4}}, we obtain
$$
\Phi_{f_1}^{\infty} \left( I_{a^{}}^{1-\varepsilon,\Psi} \widetilde{x}_1(a) \right) \leq \Phi_{f_2}^{\infty} \left( I_{a^{}}^{1-\varepsilon,\Psi} \widetilde{x}_2(a) \right) \leq 
\Phi_{f_3}^{\infty} \left( I_{a^{}}^{1-\varepsilon,\Psi} \widetilde{x}_3(a) \right) .
$$

However, as $x_i = \Phi_{f_i}^{\infty} \left( I_{a^{}}^{1-\varepsilon,\Psi} \widetilde{x}_i(a) \right)$ with $i=1,2,3$, then, using {\rm Lemma \ref{k4}}, we concluded that $x_1 \leq x_2 \leq x_3$.
\end{proof}

Consider the Cauchy type problem for $\Psi-$H fractional derivative, {\rm Eq.(\ref{I1})} subject to {\rm Eq.(\ref{I1a})}, and suppose the conditions of {\rm Theorem \ref{theorem2}} are satisfied. Denoting by $x^{*}(\cdot,x_0,\mathcal{U},f)$ the solution of this problem, we have the following result.

\begin{theorem} [Data dependence theorem] \label{t5} Let $x_{0_i}, \mathcal{U}_i, f_i$ with $i=1,2$, satisfy the conditions ${\rm (C_1)}$, ${\rm (C_2)}$ and suppose that there exists $\eta_i > 0$, with $i=1,2,3$, such that
\begin{flushleft}
\begin{tabular}{cl}
{\rm (a)} & $\left\vert x_{0_{1}}(t)-x_{0_{2}}(t)\right\vert \leq \eta_{1},~ t\in\Delta$; \\ 
{\rm (b)} & $\left\vert \mathcal{U}_{1}(u)(t)-\mathcal{U}_{2}(u)(t)\right\vert \leq \eta _{2},~
t\in\Delta,~ u\in C_{\xi, \,\mu, \, \Psi }\left( \Delta,\mathbb{R}\right)$; \\ 
{\rm (c)} & $\left\vert f_{1}(t,v)-f_{2}(t,v)\right\vert \leq \eta
_{3},~ t\in\Delta,~ v\in \mathbb{R}$.%
\end{tabular}
\end{flushleft}
Then, the following inequality holds 
\begin{align} \label{ineq28}
&\left\Vert x_{1}^{\ast }(t,x_{0_{1}},\mathcal{U}_{1},f_{1})-x_{2}^{\ast }\left(t,x_{0_{2}},\mathcal{U}_{2},f_{2}\right) \right\Vert _{_{\xi, \,\mu, \, \Psi }} \nonumber\\
&\qquad\leq \frac{c}{\mathcal{E}_{\mu }(\xi (\Psi (b)-\Psi (a))^{\mu})} 
\left\lbrace \frac{(\Psi (b)-\Psi (a))^{\varepsilon-1 }}{\Gamma (\varepsilon)}\,\eta _{1}
+  \frac{(\Psi (b)-\Psi (a))^{\mu }}{\Gamma (\mu +1)}\, (\eta _{2}+\eta _{3})\right\rbrace
\end{align}
where $c$ is given by {\rm Eq.(\ref{jose})} and $x_{i}^{\ast}(t,x_{0_{i}},\mathcal{U}_{i},f_{i})$, with $i=1,2$ are the solutions of the problem, {\rm Eq.\eqref{I1}}, satisfying {\rm Eq.(\ref{I1a})}, with respect to $x_{0_{i}},\mathcal{U}_{i},f_{i}$, $L_{f}=\max \{L_{f_{1}},L_{f_{2}}\}$ and $L_{\mathcal{U}}=\max \{L_{\mathcal{U}_{1}},L_{\mathcal{U}_{2}}\}$.
\end{theorem}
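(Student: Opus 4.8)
The plan is to recast each solution as a fixed point and to apply the $c$-Picard comparison Lemma~\ref{k5}. By the equivalence established just before Theorem~\ref{theorem2}, $x_i^{\ast}(\cdot,x_{0_i},\mathcal{U}_i,f_i)$ is the unique solution of \eqref{I1}--\eqref{I1a} with data $(x_{0_i},\mathcal{U}_i,f_i)$ precisely when it is the unique fixed point of the operator $\Theta_{f_i}$ of \eqref{MR3} built from that data. Setting $L_f=\max\{L_{f_1},L_{f_2}\}$, $L_{\mathcal{U}}=\max\{L_{\mathcal{U}_1},L_{\mathcal{U}_2}\}$ and choosing $\xi>L_{\mathcal{U}}+L_f$, Theorem~\ref{theorem2} applies to $f_1$ and $f_2$ simultaneously: each $\Theta_{f_i}$ is a contraction on $(\Omega,\|\cdot\|_{\xi,\mu,\Psi})$ with constant $(L_f+L_{\mathcal{U}})/\xi$, hence a $c$-Picard operator with $c=(1-(L_f+L_{\mathcal{U}})/\xi)^{-1}$ as in \eqref{jose}, and $\mathbf{F}_{\Theta_{f_i}}=\{x_i^{\ast}\}$.

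The core estimate is a bound on $\mathbf{d}_{\xi,\mu,\Psi}(\Theta_{f_1}(x),\Theta_{f_2}(x))$ that is uniform in $x\in\Omega$. Subtracting the two expressions \eqref{MR3}, the difference $\Theta_{f_1}(x)(t)-\Theta_{f_2}(x)(t)$ splits into three parts: the initial-data term $\mathcal{R}^{\Psi}_{\varepsilon}(t,a)\,(x_{0_1}-x_{0_2})$, the Volterra term $\frac{1}{\Gamma(\mu)}\int_a^t\mathcal{G}_{\Psi}^{\mu}(t,s)\big(\mathcal{U}_1(x)(s)-\mathcal{U}_2(x)(s)\big)\,ds$, and the nonlinearity term $\frac{1}{\Gamma(\mu)}\int_a^t\mathcal{G}_{\Psi}^{\mu}(t,s)\big(f_1(s,x(s))-f_2(s,x(s))\big)\,ds$. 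Hypotheses (a), (b), (c) bound the three integrands pointwise by $\eta_1$, $\eta_2$, $\eta_3$, and since $\frac{1}{\Gamma(\mu)}\int_a^t\mathcal{G}_{\Psi}^{\mu}(t,s)\,ds=\mathbf{I}_{a^{+}}^{\mu,\Psi}1=\frac{(\Psi(t)-\Psi(a))^{\mu}}{\Gamma(\mu+1)}$, one obtains
\[
\left|\Theta_{f_1}(x)(t)-\Theta_{f_2}(x)(t)\right|\le \frac{(\Psi(t)-\Psi(a))^{\varepsilon-1}}{\Gamma(\varepsilon)}\,\eta_1+\frac{(\Psi(t)-\Psi(a))^{\mu}}{\Gamma(\mu+1)}\,(\eta_2+\eta_3).
\]
Dividing by $\mathcal{E}_{\mu}(\xi(\Psi(t)-\Psi(a))^{\mu})$ and taking the supremum over $t\in[a,b]$, the monotonicity of $\Psi$ and of the Mittag-Leffler function yields $\mathbf{d}_{\xi,\mu,\Psi}(\Theta_{f_1}(x),\Theta_{f_2}(x))\le\eta$, where $\eta$ is exactly the expression inside the braces in \eqref{ineq28} divided by $\mathcal{E}_{\mu}(\xi(\Psi(b)-\Psi(a))^{\mu})$; crucially, this bound is independent of $x$.

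It then remains to invoke Lemma~\ref{k5} with $\mathbf{T}_1=\Theta_{f_1}$ and $\mathbf{T}_2=\Theta_{f_2}$: the operator $\Theta_{f_1}$ is $c$-PO with $\mathbf{F}_{\Theta_{f_1}}=\{x_1^{\ast}\}$, the uniform estimate above verifies hypothesis (ii) with that $\eta$, and $x_2^{\ast}\in\mathbf{F}_{\Theta_{f_2}}$; hence $\mathbf{d}_{\xi,\mu,\Psi}(x_2^{\ast},x_1^{\ast})\le c\,\eta$, which is precisely \eqref{ineq28}.

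The step I expect to be the main obstacle is the passage to the supremum: one must argue that $t\mapsto(\Psi(t)-\Psi(a))^{\mu}\big/\mathcal{E}_{\mu}(\xi(\Psi(t)-\Psi(a))^{\mu})$ and, more delicately, $t\mapsto(\Psi(t)-\Psi(a))^{\varepsilon-1}\big/\mathcal{E}_{\mu}(\xi(\Psi(t)-\Psi(a))^{\mu})$ are controlled by their values at the right endpoint $t=b$. The $\mu$-power quotient is handled directly by monotonicity, but since $\varepsilon=\mu+\nu(1-\mu)\le1$ the factor $(\Psi(t)-\Psi(a))^{\varepsilon-1}$ is singular at $t=a$, so this term requires the increasing character of $\Psi$ together with a careful monotonicity analysis of the quotient (the case $\varepsilon=1$ being trivial). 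Everything else---identifying the fixed points via the equivalence, working with the largest Lipschitz constants, and the uniformity in $x$---is routine once Theorem~\ref{theorem2} and Lemma~\ref{k5} are in hand.
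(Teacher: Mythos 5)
Your proposal follows essentially the same route as the paper's proof: you form the two operators $\Theta_{x_{0_i},\mathcal{U}_i,f_i}$ from \eqref{MR3}, observe via Theorem \ref{theorem2} that both are $c$-Picard operators with the common constant \eqref{jose} built from $L_f=\max\{L_{f_1},L_{f_2}\}$ and $L_{\mathcal{U}}=\max\{L_{\mathcal{U}_1},L_{\mathcal{U}_2}\}$, derive the same pointwise bound $\mathcal{R}^{\Psi}_{\varepsilon}(t,a)\,\eta_1+\frac{(\Psi(t)-\Psi(a))^{\mu}}{\Gamma(\mu+1)}(\eta_2+\eta_3)$ for the difference of the operators, pass to the $(\xi,\mu,\Psi)$-norm, and finish with the $c$-Picard data-dependence lemma. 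The only real difference is which lemma you cite: you use Lemma \ref{k5}, while the paper invokes Lemma \ref{k6}, whose conclusion is not even stated in the text; your choice is the more defensible one.

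One remark on the step you single out as the main obstacle: it is indeed the weak point, but your proposed resolution goes in the wrong direction. For $\varepsilon<1$ the map $t\mapsto(\Psi(t)-\Psi(a))^{\varepsilon-1}\big/\mathcal{E}_{\mu}\left(\xi(\Psi(t)-\Psi(a))^{\mu}\right)$ is decreasing on $(a,b]$ (numerator decreasing, denominator increasing), so its supremum is approached as $t\to a^{+}$, where it blows up; it is not controlled by its value at $t=b$, and no monotonicity analysis will make it so. The paper's own proof performs exactly the same unjustified replacement of $t$ by $b$ in the $\eta_1$-term, so your write-up reproduces the published argument faithfully, gap included; a genuine repair would need $\varepsilon=1$, or a norm weighted by $(\Psi(t)-\Psi(a))^{1-\varepsilon}$, or a different way of measuring the discrepancy in the initial data, none of which the paper supplies. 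The terms involving $\eta_2$ and $\eta_3$, and everything else in your argument, are fine and match the paper.
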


\begin{proof}
For the proof, we consider the operators 
\begin{align}
\Theta _{x_{0_{i}},\mathcal{U}_{i},f_{i}}(x)(t)
&=\mathcal{R}^{\Psi}_{\varepsilon}\left( t,a\right) x_{0_{i}}+\frac{1}{\Gamma (\mu )}\int_{a}^{t}\mathcal{G}_{\Psi }^{\mu }\left(t,s\right) \mathcal{U}_{i}(x)(s)\,\mathrm{d}s +\frac{1}{\Gamma (\mu )}\int_{a}^{t}\mathcal{G}_{\Psi }^{\mu }\left( t,s\right) f_{i}(s,x(s))\,\mathrm{d}s
\end{align}
for $i=1,2$. By means of the {\rm Theorem \ref{theorem2}} these operators are $c_{i}$-Picard operators with $c_{i}$ is given by {\rm Eq.(\ref{jose})}. On the other hand, we can write
\begin{eqnarray}
&&\left| \Theta _{x_{0_{1}},\mathcal{U}_{1},f_{1}}(x)(t)-\Theta _{x_{0_{2}},\mathcal{U}_{2},f_{2}}(x)(t)\right|  \notag \\
&=&\left|\mathcal{R}^{\Psi}_{\varepsilon}\left( t,a\right) x_{0_{1}}-\mathcal{R}^{\Psi}_{\varepsilon}\left( t,a\right)x_{0_{2}}\right.   \notag \\
&&+\frac{1}{\Gamma (\mu )}\int_{a}^{t}\mathcal{G}_{\Psi }^{\mu }\left( t,s\right) \mathcal{U}_{1}(x)(s)\,\mathrm{d}s-\frac{1}{\Gamma (\mu )}\int_{a}^{t}\mathcal{G}_{\Psi }^{\mu }\left( t,s\right) \mathcal{U}_{2}(x)(s)\,\mathrm{d}s \notag \\
&&\left. +\frac{1}{\Gamma (\mu )}\int_{a}^{t}\mathcal{G}_{\Psi }^{\mu }\left(t,s\right) f_{1}(s,x(s))\,\mathrm{d}s-\frac{1}{\Gamma (\mu )}\int_{a}^{t}\mathcal{G}_{\Psi }^{\mu }\left( t,s\right) f_{2}(s,x(s))\,\mathrm{d}s\right|  \notag \\
&\leq &\mathcal{R}^{\Psi}_{\varepsilon}\left( t,a\right) \eta _{1}+\frac{1}{\Gamma (\mu )}\int_{a}^{t}\mathcal{G}_{\Psi }^{\mu }\left( t,s\right) \left| \mathcal{U}_{1}(x)\left(
s\right) -\mathcal{U}_{2}(x)\left( s\right) \right|\,\mathrm{d}s  \notag\\
&&+\frac{1}{\Gamma (\mu )}\int_{a}^{t}\mathcal{G}_{\Psi }^{\mu }\left(t,s\right) \left| f_{1}(s,x\left( s\right) )-f_{2}\left( s,x\left( s\right) \right) \right|\,\mathrm{d}s  \notag \\
&\leq &\mathcal{R}^{\Psi}_{\varepsilon}\left( t,a\right) \eta _{1}+\frac{\eta _{2}}{\Gamma 	(\mu )}\int_{a}^{t}\mathcal{G}_{\Psi }^{\mu }\left( t,s\right) \,\mathrm{d}s+\frac{\eta _{3}}{\Gamma (\mu )}\int_{a}^{t}\mathcal{G}_{\Psi }^{\mu }\left(t,s\right) \,\mathrm{d}s\notag \\
&= &\mathcal{R}^{\Psi}_{\varepsilon}\left( t,a\right) \eta _{1}
+(\eta _{2}+\eta _{3}) \,\mathbf{I}_{a+}^{\mu ,\Psi }(1) \notag \\
&\leq & \frac{(\Psi (t)-\Psi (a))^{\varepsilon-1 }}{\Gamma (\varepsilon)}\,\eta _{1}
+  \frac{(\Psi (t)-\Psi (a))^{\mu }}{\Gamma (\mu +1)}\, (\eta _{2}+\eta _{3}) \notag.
\end{eqnarray}
Therefore,
\begin{eqnarray*}
&&\left\Vert \Theta _{x_{0_{1}},\mathcal{U}_{1},f_{1}}(x)-\Theta _{x_{0_{2}},\mathcal{U}_{2},f_{2}}(x)\right\Vert _{\xi, \,\mu, \, \Psi }\\ &=&\sup_{t\in \Delta}\frac{\left| \Theta _{x_{0_{1}},\mathcal{U}_{1},f_{1}}(x)(t)-\Theta _{x_{0_{2}},\mathcal{U}_{2},f_{2}}(x)(t)\right| }{\mathcal{E}_{\mu }(\xi (\Psi (t)-\Psi (a))^{\mu})}\\
&\leq& \frac{1}{\mathcal{E}_{\mu }(\xi (\Psi (b)-\Psi (a))^{\mu})} 
\left\lbrace \frac{(\Psi (b)-\Psi (a))^{\varepsilon-1 }}{\Gamma (\varepsilon)}\,\eta _{1}
+  \frac{(\Psi (b)-\Psi (a))^{\mu }}{\Gamma (\mu +1)}\, (\eta _{2}+\eta _{3})\right\rbrace \notag
\end{eqnarray*}
The proof of the inequality \eqref{ineq28} follows by applying the {\rm Lemma \ref{k6}}.
\end{proof}

The Theorem \ref{jr} that will be investigated below, is a direct application of Theorem 2 (See \cite{pri1}) which deals with the functional Pompeiu-Hausdorff operator.
									
\begin{theorem}\label{jr} Assume that $f_{1},f_{2}:C_{\xi, \,\mu, \, \Psi }\left(\Delta\, , \mathbb{R}\right) \rightarrow \mathbb{R}$ satisfy the conditions $(C_{1})$ and $(C_{2})$, given in {\rm Eq.(\ref{C1})} and {\rm Eq.(\ref{C2})}. Let $S_{\Phi _{f_{1}}},S_{\Phi_{f_{2}}}$ be the solution set of system {\rm Eq.(\ref{I1})}, satisfying {\rm Eq.(\ref{I1a})} corresponding to $f_{1}$ and $f_{2}$. Suppose that, there exist $\eta _{i}>0$, with $i=1,2$, such that 
\begin{equation}
\left\vert \mathcal{U}_{1}(u)(t)-\mathcal{U}_{2}(t)(u)\right\vert \leq \eta _{1}\quad \mathrm{and}\quad \left\vert f_{1}(t,v)-f_{2}(t,v)\right\vert \leq \eta _{2}  \label{3.4}
\end{equation}
for $t\in\Delta$, $u\in C_{\xi, \,\mu, \, \Psi }\left( \Delta,\mathbb{R}\right)$ and $v\in \mathbb{R}$. Then, 
\begin{equation}
H_{\left\Vert \cdot \right\Vert _{\xi, \,\mu, \, \Psi }}\left( S_{\Phi_{f_{1}}},S_{\Phi _{f_{2}}}\right) \leq \frac{c\,(\eta _{1}+\eta _{2})\, (\Psi (b)-\Psi (a))^{\mu }}{\Gamma (\mu +1)\,\mathcal{E}_{\mu }(\xi (\Psi (b)-\Psi (a))^{\mu})} ,
\end{equation}
where  $c$ is given by {\rm Eq.(\ref{jose})},  $L_{f}=\max \{L_{f_{1}},L_{f_{2}}\}$, $L_{\mathcal{U}}=\max\{L_{\mathcal{U}_{1}},L_{\mathcal{U}_{2}}\}$ and $H_{\left\Vert  \cdot  \right\Vert_{\xi, \,\mu, \, \Psi }}$ denotes the Pompeiu-Hausdorff functional with respect to $\left\Vert  \cdot  \right\Vert _{\xi, \,\mu, \, \Psi }$ on $C_{\xi, \,\mu, \, \Psi }\left(\Delta\, , \mathbb{R}\right)$.
\end{theorem}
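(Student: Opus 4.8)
The strategy is to reduce the statement to the setting of the $c$-weakly Picard operator machinery, specifically to the Pompeiu--Hausdorff estimate for solution sets (Theorem~2 of \cite{pri1}). First I would recall from Theorem~\ref{theorem2} that, under $(C_1)$ and $(C_2)$, each operator $\Phi_{f_i}$ ($i=1,2$) is a $c$-weakly Picard operator on $\Omega=C_{\xi,\,\mu,\,\Psi}(\Delta,\mathbb{R})$ with the same constant $c=\bigl(1-(L_f+L_{\mathcal{U}})/\xi\bigr)^{-1}$, once we take $L_f=\max\{L_{f_1},L_{f_2}\}$ and $L_{\mathcal{U}}=\max\{L_{\mathcal{U}_1},L_{\mathcal{U}_2}\}$ so that the contraction constant is uniform in $i$. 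The fixed-point sets $\mathbf{F}_{\Phi_{f_i}}$ are exactly the solution sets $S_{\Phi_{f_i}}$ of Eq.~\eqref{I1}--\eqref{I1a} corresponding to $f_i$ (using the equivalence with the integral equation \eqref{sol1} established before Theorem~\ref{theorem2}, restricted to each stratum $\Omega_{x_0}$).

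Next I would produce a uniform pointwise bound on $\bigl|\Phi_{f_1}(x)(t)-\Phi_{f_2}(x)(t)\bigr|$ for every $x\in\Omega$ and $t\in\Delta$. Writing out the two integral operators, the $\mathcal{R}^{\Psi}_{\varepsilon}$ terms cancel (they involve the same data $x_0$, since we compare within a common stratum), and what remains is
\begin{equation*}
\bigl|\Phi_{f_1}(x)(t)-\Phi_{f_2}(x)(t)\bigr|
\leq \frac{1}{\Gamma(\mu)}\int_a^t \mathcal{G}_{\Psi}^{\mu}(t,s)\,\bigl|\mathcal{U}_1(x)(s)-\mathcal{U}_2(x)(s)\bigr|\,\mathrm{d}s
+\frac{1}{\Gamma(\mu)}\int_a^t \mathcal{G}_{\Psi}^{\mu}(t,s)\,\bigl|f_1(s,x(s))-f_2(s,x(s))\bigr|\,\mathrm{d}s.
\end{equation*}
Applying the hypotheses in \eqref{3.4}, this is bounded by $(\eta_1+\eta_2)\,\mathbf{I}_{a^+}^{\mu,\Psi}(1)=(\eta_1+\eta_2)\,(\Psi(t)-\Psi(a))^{\mu}/\Gamma(\mu+1)$. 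Dividing by $\mathcal{E}_{\mu}(\xi(\Psi(t)-\Psi(a))^{\mu})$, taking the supremum over $t\in\Delta$, and using monotonicity of $\Psi$ together with the fact that $s\mapsto \mathcal{E}_{\mu}(\xi(\Psi(s)-\Psi(a))^{\mu})$ is increasing (so its reciprocal is minimized at $b$), I obtain
\begin{equation*}
\bigl\|\Phi_{f_1}(x)-\Phi_{f_2}(x)\bigr\|_{\xi,\,\mu,\,\Psi}
\leq \frac{(\eta_1+\eta_2)\,(\Psi(b)-\Psi(a))^{\mu}}{\Gamma(\mu+1)\,\mathcal{E}_{\mu}(\xi(\Psi(b)-\Psi(a))^{\mu})}=:\eta,
\end{equation*}
uniformly in $x\in\Omega$.

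Finally I would invoke the Pompeiu--Hausdorff estimate for $c$-weakly Picard operators (Theorem~2 of \cite{pri1}): if $\mathbf{T}_1,\mathbf{T}_2$ are $c$-weakly Picard operators on a metric space with $\mathbf{d}(\mathbf{T}_1(x),\mathbf{T}_2(x))\leq \eta$ for all $x$, then $H_{\mathbf{d}}(\mathbf{F}_{\mathbf{T}_1},\mathbf{F}_{\mathbf{T}_2})\leq c\,\eta$. With $\mathbf{T}_i=\Phi_{f_i}$ and the value of $\eta$ just computed, this yields exactly
\begin{equation*}
H_{\|\cdot\|_{\xi,\,\mu,\,\Psi}}\bigl(S_{\Phi_{f_1}},S_{\Phi_{f_2}}\bigr)\leq \frac{c\,(\eta_1+\eta_2)\,(\Psi(b)-\Psi(a))^{\mu}}{\Gamma(\mu+1)\,\mathcal{E}_{\mu}(\xi(\Psi(b)-\Psi(a))^{\mu})},
\end{equation*}
which is the claimed bound. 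The one genuinely delicate point is making sure the comparison of $\Phi_{f_1}$ and $\Phi_{f_2}$ is carried out on a common domain where the $\mathcal{R}^{\Psi}_{\varepsilon}$-term cancels and the Pompeiu--Hausdorff theorem applies cleanly across all strata $\Omega_{x_0}$ simultaneously; once the uniform bound $\eta$ is in hand, the rest is a direct citation.
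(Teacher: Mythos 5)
Your proposal is correct and follows essentially the same route as the paper: both establish the uniform bound $\|\Phi_{f_1}(x)-\Phi_{f_2}(x)\|_{\xi,\,\mu,\,\Psi}\leq (\eta_1+\eta_2)(\Psi(b)-\Psi(a))^{\mu}/\bigl(\Gamma(\mu+1)\,\mathcal{E}_{\mu}(\xi(\Psi(b)-\Psi(a))^{\mu})\bigr)$ by cancelling the initial-data term and applying \eqref{3.4} under the integrals, and then conclude via the abstract Pompeiu--Hausdorff estimate for $c$-weakly Picard operators (Theorem~2 of \cite{pri1}) with $c$ from Eq.~\eqref{jose}. Your extra care about comparing within a common stratum is harmless but not needed, since both operators are evaluated at the same $x$ and the $\mathcal{R}^{\Psi}_{\varepsilon}$-terms cancel identically.
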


\begin{proof}
Let $f_{1}:C_{\xi, \,\mu, \, \Psi }(\Delta\, , \mathbb{R})\rightarrow \mathbb{R}$ and $f_{2}:C_{\xi, \,\mu, \, \Psi }(\Delta\, , \mathbb{R})\rightarrow \mathbb{R}$ be two function and $S_{\Phi _{f_{1}}},S_{\Phi _{f_{2}}}$ a solution set with $\Phi_{f_{1}}$ and $\Phi _{f_{2}}$ given by 
\begin{align*}
\Phi _{f_{1}}x(t)&=\mathcal{R}^{\Psi}_{\varepsilon}\left( t,a\right) \mathbf{I}_{a^{+}}^{1-\xi ,\Psi}x(a)+\frac{1}{\Gamma (\mu )}\int_{a}^{t}\mathcal{G}_{\Psi }^{\mu }\left(t,s\right) \mathcal{U}_{1}(x)(s)\,\mathrm{d}s+\frac{1}{\Gamma (\mu )}\int_{a}^{t}\mathcal{G}_{\Psi }^{\mu }\left( t,s\right) f_{1}(s,x(s))\,\mathrm{d}s
\end{align*}
and 
\begin{align*}
\Phi _{f_{2}}x(t)&=\mathcal{R}^{\Psi}_{\varepsilon}\left( t,a\right) \mathbf{I}_{a^{+}}^{1-\xi ,\Psi}x(a)+\frac{1}{\Gamma (\mu )}\int_{a}^{t}\mathcal{G}_{\Psi }^{\mu }\left(t,s\right) \mathcal{U}_{2}\left( x\right) (s)\,\mathrm{d}s+\frac{1}{\Gamma (\mu )}\int_{a}^{t}\mathcal{G}_{\Psi }^{\mu }\left( t,s\right) f_{2}(s,x(s))\,\mathrm{d}s.
\end{align*}
Using the hypotheses given by {\rm Eq.(\ref{3.4})}, we have 

\begin{align}
H_{\left\Vert \cdot \right\Vert _{\xi, \,\mu, \, \Psi }}\left( S_{\Phi_{f_{1}}},S_{\Phi _{f_{2}}}\right) &=\max \left\lbrace  \sup_{t\in S_{\Phi_{f_{1}}}}\inf_{t\in S_{\Phi _{f_{2}}}}\left\Vert \Phi _{f_{1}}(x)-\Phi_{f_{2}}(x)\right\Vert _{_{\xi, \,\mu, \, \Psi }} \,\,, \right.\left. \,\,\sup_{t\in S_{\Phi_{f_{2}}}}\inf_{t\in S_{\Phi _{f_{1}}}}\left\Vert \Phi _{f_{1}}(x)-\Phi_{f_{2}}(x)\right\Vert _{_{\xi, \,\mu, \, \Psi }}\right\rbrace  .  \label{P-1}
\end{align}
Noting that
\begin{eqnarray}\label{P-2}
\left\vert \Phi _{f_{1}}(x)(t)-\Phi _{f_{2}}(x)(t)\right\vert &=&\left\vert \frac{1}{\Gamma (\mu )}\int_{a}^{t}\mathcal{G}_{\Psi }^{\mu}\left( t,s\right) \mathcal{U}_{1}(x)(s)\,\mathrm{d}s-\frac{1}{\Gamma (\mu )}\int_{a}^{t}\mathcal{G}_{\Psi }^{\mu }\left( t,s\right) \mathcal{U}_{2}(x)(s)\,\mathrm{d}s\right.   \notag \\
&&\left. +\frac{1}{\Gamma (\mu )}\int_{a}^{t}\mathcal{G}_{\Psi }^{\mu }\left(t,s\right) f_{1}(s,x(s))\,\mathrm{d}s-\frac{1}{\Gamma (\mu )}\int_{a}^{t}\mathcal{G}_{\Psi }^{\mu }\left( t,s\right) f_{2}(s,x(s))\,\mathrm{d}s\right\vert \notag \\
&\leq &\frac{1}{\Gamma (\mu )}\int_{a}^{t}\mathcal{G}_{\Psi }^{\mu }\left(t,s\right) \left\vert \mathcal{U}_{1}(x)(s)-\mathcal{U}_{2}(x)(s)\right\vert \,\mathrm{d}s  \notag \\
&&+\frac{1}{\Gamma (\mu )}\int_{a}^{t}\mathcal{G}_{\Psi }^{\mu }\left(t,s\right) \left\vert f_{1}(s,x(s))-f_{2}(s,x(s))\right\vert \,\mathrm{d}s  \notag \\
&\leq &\frac{\eta _{1}}{\Gamma (\mu )}\int_{a}^{t}\mathcal{G}_{\Psi }^{\mu}\left( t,s\right) \,\mathrm{d}s+\frac{\eta _{2}}{\Gamma (\mu )}
\int_{a}^{t}\mathcal{G}_{\Psi }^{\mu }\left( t,s\right) \mathrm{d}s  \notag \\
&=&(\eta _{1}+\eta _{2})\frac{(\Psi (t)-\Psi (a))^{\mu }}{\Gamma (\mu+1)}.
\end{eqnarray}
Therefore, for any $x,y\in \Omega $, 
\begin{eqnarray*}
\left\Vert \Phi _{f_{1}}(x)-\Phi _{f_{2}}(x)\right\Vert _{\xi, \,\mu, \, \Psi } &=&\sup_{t\in \Delta}\frac{\left| \Phi _{f_{1}}(x)(t)-\Phi _{f_{2}}(x)(t)\right| }{\mathcal{E}_{\mu }(\xi (\Psi (t)-\Psi (a))^{\mu})}\\
&\leq& \frac{(\eta _{1}+\eta _{2})}{\mathcal{E}_{\mu }(\xi (\Psi (b)-\Psi (a))^{\mu})} \times \frac{(\Psi (b)-\Psi (a))^{\mu }}{\Gamma (\mu +1)}\notag
\end{eqnarray*}
Substituting {\rm Eq.(\ref{P-2})} in {\rm Eq.(\ref{P-1})} and following the same steps as in {\rm Theorem \ref{theorem2}}, we conclude that 
\begin{equation}
H_{\left\Vert  \cdot  \right\Vert _{_{\xi, \,\mu, \, \Psi }}}\left( S_{\Phi_{f_{1}}},S_{\Phi _{f_{2}}}\right) \leq  \frac{c\,(\eta _{1}+\eta _{2})\, (\Psi (b)-\Psi (a))^{\mu }}{\Gamma (\mu +1)\,\mathcal{E}_{\mu }(\xi (\Psi (b)-\Psi (a))^{\mu})} 
\end{equation}
where $c$ is given by {\rm Eq.(\ref{jose})},   $L_{\mathcal{U}}=\max \{L_{\mathcal{U}_{1}},L_{\mathcal{U}_{2}}\}$ and $L_{f}=\max\{L_{f_{1}},L_{f_{2}}\}$. 
\end{proof}

For an example we will consider the following fractional functional differential equation 
\begin{equation}
{}^{\mathbf{H}}\mathfrak{D}_{a^{+}}^{\mu ,\nu ,\Psi }x(t)=\frac{1}{\Gamma (\mu )}\int_{0}^{t}\mathcal{G}_{\Psi }^{\mu }\left( t,s\right) \mathcal{A}(t,s,x(s),x(\lambda s)\,)
\mathrm{d}s+f(t,x(t)),  \label{4.1}
\end{equation}%
with $t\in \Delta_{1}$ $(\Delta_{1}:=[0,1])$ and ${}^{\mathbf{H}}\mathfrak{D}_{a^{+}}^{\mu ,\nu ,\Psi}\left( \cdot \right) $ is the $\Psi-$H fractional derivative.

Some conditions will be necessary for the investigation of {\rm Eq.(\ref{4.1})}. Then, consider the following conditions:

{\rm (A)} $\lambda \in (0,1)$, $\mathcal{A}:C_{\xi, \,\mu, \, \Psi }(\Delta_{1}\times \Delta_{1}\times \mathbb{R}^{2}, \, \mathbb{R})\rightarrow \mathbb{R}$, $f:C_{\xi, \,\mu, \, \Psi }(\Delta_{1},\,  \mathbb{R}%
)\rightarrow \mathbb{R}$;

{\rm (B)} $\exists \, L_{f}>0$ such that $|f(t,u_{1})-f(t,u_{2})|\leq L_{f}|u_{1}-u_{2}|$, with $t\in \Delta_{1}$ and $u_{1},u_{2}\in \mathbb{R}$%
;

{\rm (C)} $\exists \, L_{\mathcal{A}}>0$ such that 
\begin{equation}
\left\vert \mathcal{A}(t,s,u_{1},u_{2})-\mathcal{A}(t,s,v_{1},v_{2})\right\vert \leq L_{\mathcal{A}}\left(\left\vert u_{1}-v_{1}\right\vert +\left\vert u_{2}-v_{2}\right\vert \right) 
\end{equation}
with $t,s\in \Delta_{1}$ and $u_{1},v_{1},u_{2},v_{2}\in \mathbb{R}$;

{\rm (D)} $\exists \, \xi >0$ such that, 

\begin{equation}
\frac{L_f}{\xi}+ \frac{2 L_{\mathcal{A}}}{\xi} \,\left( \frac{1}{\xi}-\frac{\left( \Psi \left( 1\right) -\Psi \left( 0\right) \right) ^{\mu }}{\Gamma \left( \mu +1\right) }\right) < 1.
\end{equation}
Note that $x\in C_{\xi, \,\mu, \, \Psi }^{1}(\Delta_{1},\mathbb{R})$ is a solution of {\rm Eq.(\ref{4.1})} iff  $x\in C_{\xi, \,\mu, \, \Psi }(\Delta_{1},\mathbb{R})$ is a solution of the following fractional integral equation
\begin{align}\label{4.2}
x(t)&=\mathcal{R}^{\Psi}_{\varepsilon}\left( t,0\right) \mathbf{I}_{0^{+}}^{\mu ,\Psi}x(0)+\mathbf{I}_{0^{+}}^{\mu ,\Psi }f(t,x(t)) +\mathbf{I}_{0^{+}}^{\mu ,\Psi }\left(\frac{1}{\Gamma (\mu )}\int_{0}^{t}\mathcal{G}_{\Psi }^{\mu }\left( t,s\right)\mathcal{A}(t,s,x(s),x(\lambda s))\mathrm{d}s\right),~t\in \Delta_{1} 
\end{align}%
Consider the operator $\Phi _{f}:C_{\xi, \,\mu, \, \Psi }([0,1],\mathbb{R})\rightarrow C_{\xi, \,\mu, \, \Psi }([0,1],\mathbb{R})$ defined by 
\begin{align}
\Phi _{f}(x)(t)&=\mathcal{R}^{\Psi}_{\varepsilon}\left( t,0\right) \mathbf{I}_{0^{+}}^{\mu ,\Psi}x(0)+\mathbf{I}_{0^{+}}^{\mu ,\Psi }f(t,x(t))+\mathbf{I}_{0^{+}}^{\mu ,\Psi }\left( \frac{1}{\Gamma (\mu )}\int_{0}^{t}\mathcal{G}_{\Psi }^{\mu }\left( t,s\right)\mathcal{A}(t,s,x(s),x(\lambda s))\mathrm{d}s\right),~t\in \Delta_{1} 
\end{align}%

Consider on $\Omega =C_{\xi, \,\mu, \, \Psi }(\Delta_{1},\mathbb{R})$ the $({\xi, \,\mu, \, \Psi }) $-norm defined as  and for $\delta \in \mathbb{R}$ the set $\Omega _{\delta }:=\{x\in C_{\xi, \,\mu, \, \Psi }(\Delta_{1},\mathbb{R}): I_{0^{+}}^{1-\varepsilon ,\Psi }x(0)=\delta \}$. Some observations of the sets $\Omega$ and $\Omega_\delta$ can be found in the development of the paper.

From the conditions of {\rm Theorem \ref{theorem2}} we have the operator $\Phi_f$ is weakly Picard operator in $C_{\xi, \,\mu, \, \Psi }(\Delta_{1},\mathbb{R})$. Additionally, one can apply  {\rm Theorem \ref{t3}}, Theorem \ref{t4} and Theorem \ref{t5} for the investigation  of $\check{C}$aplygin  inequalities, monotonicity and data dependency of the solution of {\rm Eq.(\ref{4.1})}.

An important property of the $\Psi-$H fractional derivative is that it include the tremendous class of fractional derivatives and with them their properties which can be acquired as specific cases of function $\Psi$ and parameters $\mu$ and $\nu$. 

Choosing $\Psi(t)=t$ and taking the limit $\nu \to 1$ on both sides of {\rm Eq.(\ref{4.1})}, we obtain the following fractional differential equation 
\begin{equation}  \label{Eq.*}
D_{0^{+}}^{\mu} x(t) = \frac{1}{\Gamma(\mu)} \int_0^t (t-s)^{\mu-1} \mathcal{A}(t,s,x(s),x(\lambda s))\, \mathrm{d}s + f(t,x(t))
\end{equation}
where $D_{0^{+}}^{\mu} (\cdot)$ is the Caputo fractional derivative of order $\mu$, with $0 < \mu \leq 1$.

Assuming the conditions {\rm (A)}, {\rm (B)}, {\rm (C)} and {\rm (D)}. By means of these assumptions {\rm Eq.(\ref{4.2})} is equivalent to an integral equation 
\begin{align}
x(t)&=\frac{t^{\varepsilon -1}}{\Gamma (\varepsilon )}\mathbf{I}_{0^{+}}^{1-\varepsilon}x(0)+\mathbf{I}_{0^{+}}^{1-\varepsilon }f(t,x(t)) +\mathbf{I}_{0^{+}}^{\mu }\left( \frac{1}{\Gamma (\mu )}\int_{0}^{t}(t-s)^{\mu -1}\mathcal{A}(t,s,x(s),x(\lambda s))\,\mathrm{d}s\right), ~t\in \Delta_{1}.
\end{align}%

Let us consider the following operator $\widetilde{\Phi }_{f}:C_{\xi, \,\mu, \, \Psi }(\Delta_{1},\mathbb{R})\rightarrow C_{\xi, \,\mu, \, \Psi }(\Delta_{1},\mathbb{R})$ defined by 
\begin{align}
\widetilde{\Phi }_{f}(x)(t)&=\frac{t^{\varepsilon -1}}{\Gamma (\varepsilon )}\mathbf{I}_{0^{+}}^{1-\varepsilon}x(0)+\mathbf{I}_{0^{+}}^{1-\varepsilon }f(t,x(t)) +\mathbf{I}_{0^{+}}^{\mu }\left( \frac{1}{\Gamma (\mu )}\int_{0}^{t}(t-s)^{\mu -1}\mathcal{A}(t,s,x(s),x(\lambda s))\,\mathrm{d}s\right), ~t\in \Delta_{1}.
\end{align}%
On the other hand, consider on $X=C_{\xi, \,\mu}(\Delta_{1},\mathbb{R})$ the $ ({\xi, \,\mu}) $--norm $\left\Vert  \cdot  \right\Vert _{\xi, \,\mu }$ defined as
\begin{equation}
\left\Vert x\right\Vert _{\xi, \,\mu,}=\sup_{t\in \Delta_1}\frac{\left | x\left(t\right) \right | }{\mathcal{E}_{\mu } \left(  \xi t
^{\mu }\right)  }.  
\end{equation}
 Note that if  we choose $\Psi \left( t\right) =t$ and ~ $({\xi, \,\mu,})-$ norm, we have a particular case of norm. However, it still remains a $({\xi, \,\mu,})-$ norm. Other particular cases, as noted above, simply choose $\Psi \left(\cdot \right) $, $\mu $ and the limits $\nu \rightarrow 1$ and  $\nu\rightarrow 0.$

\section{Concluding remarks}

In the present paper, we presented a new class of the  fractional functional differential equation with the abstract operator of Volterra. From this new class fractional functional differential equation, we investigated the existence, uniqueness and data dependence, and the results presented were in the context of the Picard operators. Provided an example to elucidate the results obtained. The new results obtained here by means of the $\Psi-$H fractional derivative, actually contribute to the study of fractional differential equations, as providing a new class of solutions that may be important in obtaining new results.

\section{Acknowledgment}

\textbf{ I have been financially supported  by  PNPD-CAPES scholarship of the Pos-Graduate Program in Mathematics Applied IMECC-Unicamp.}  

\bibliography{ref}
\bibliographystyle{plain}

\end{document}